\newtheorem{theo}{Theorem}[section]
\newtheorem{lem}{Lemma}[section]
\newtheorem{remark}{Remark}[section]
\numberwithin{equation}{section}
\newcommand{\lbl}[1]{\label{#1}}
\newcommand{\be}{\begin{equation}}
\newcommand{\ee}{\end{equation}}
\newcommand\bes{\begin{eqnarray}} \newcommand\ees{\end{eqnarray}}
\newcommand{\bess}{\begin{eqnarray*}}
\newcommand{\eess}{\end{eqnarray*}}
\newcommand{\bbbb}{\left\{\begin{aligned}}
\newcommand{\nnnn}{\end{aligned}\right.}
\newcommand{\bea}{\begin{align*}}
\newcommand{\eea}{\end{align*}}
\newcommand\ep{\varepsilon}
\newcommand\kk{\left}
\newcommand\rr{\right}
\newcommand\dd{\displaystyle}
\newcommand\dx{{\rm d}x}
\newcommand\lm{\lambda}
\newcommand\yy{\infty}
\newcommand\ol{\overline}
\newcommand\ud{\underline}
\begin{document}\thispagestyle{empty}
\setlength{\baselineskip}{16pt}
\begin{center}
 {\LARGE\bf Dynamics for a diffusive epidemic model with a free boundary: spreading-vanishing dichotomy\footnote{This work was supported by NSFC Grants 11901541, 12101569, 12301247}}\\[4mm]
  {\Large Xueping Li\textsuperscript{\dag}\footnote{Corresponding author. {\sl E-mail}: lixueping@zzuli.edu.cn}, \ \  Lei Li\textsuperscript{\ddag}, \ \  Ying Xu\textsuperscript{\dag},\ \ Dandan Zhu\textsuperscript{\dag}}\\[1.5mm]
\textsuperscript{\dag}{School of Mathematics and Information Science, Zhengzhou University of Light Industry, Zhengzhou, 450002, China}\\
\textsuperscript{\ddag}{College of Science, Henan University of Technology, Zhengzhou, 450001, China}
\end{center}

\date{\today}

\begin{quote}
\noindent{\bf Abstract.} This paper involves a diffusive epidemic model whose domain has one free boundary with the Stefan boundary condition, and one fixed boundary subject to the usual homogeneous Dirichlet or Neumann condition. By using the standard upper and lower solutions method and the regularity theory, we first study some related steady state problems which help us obtain the exact longtime behaviors of solution component $(u,v)$. Then we prove there exists the unique classical solution whose longtime behaviors are governed by a spreading-vanishing dichotomy. Lastly, the criteria determining when spreading or vanishing happens are given with respect to the basic reproduction number $\mathcal{R}_0$, the initial habitat $[0,h_0]$, the expanding rates $\mu_1$ and $\mu_2$ as well as the initial function $(u_0,v_0)$. The criteria reveal the effect of the cooperative behaviors of agents and humans on spreading and vanishing.

\textbf{Keywords}: Epidemic model; free boundary; steady state; spreading and vanishing

\textbf{AMS Subject Classification (2000)}: 35K57, 35R09,
35R20, 35R35, 92D25
\end{quote}

\section{Introduction}
\renewcommand{\thethm}{\Alph{thm}}
{\setlength\arraycolsep{2pt}
Using the reaction-diffusion equations to model the spreading of epidemics is a hot topic in the field of biomathematics. The related research not only unveils some interesting phenomena of propagation, but also promotes the development of the corresponding mathematical theory.
In order to study the propagation of an oral-faecal transmitted epidemic, Hsu and Yang \cite{HY} proposed the reaction-diffusion system
\bes\label{1.1}
\left\{\!\begin{aligned}
&u_{t}=d_1\Delta u-au+H(v), & &t>0,~x\in\mathbb{R},\\
&v_{t}=d_2\Delta v-bv+G(u), & &t>0,~x\in\mathbb{R},
\end{aligned}\right.
 \ees
 where $H(v)$ and $G(u)$ satisfy
\begin{enumerate}
\item[{\bf(H)}]\; $H,G\in C^2([0,\yy))$, $H(0)=G(0)=0$, $H'(z),G'(z)>0$ in $[0,\yy)$, $H''(z), G''(z)<0$ in $(0,\yy)$, and $G(H(\hat z)/a)<b\hat{z}$ for some $\hat{z}>0$.
 \end{enumerate}
In model \eqref{1.1}, $u(t,x)$ and $v(t,x)$ stand for the spatial
concentrations of the bacteria and the infective human population, respectively, at time $t$ and location $x$ in the one dimensional habitat; $-au$ represents the natural death rate of the bacterial population and $H(v)$ denotes the contribution of the infective human to the growth rate of the bacteria; $-bv$ is the fatality rate of the infective human population and $G(u)$ is the infection rate of human population; $d_1$ and $d_2$, respectively, stand for the diffusion rate of bacteria and infective human. They showed that there exists a threshold value $\mathcal{R}_0$, defined by
  \bes
  \mathcal{R}_0=\frac{H'(0)G'(0)}{ab},
  \lbl{x.1}\ees
such that when $\mathcal{R}_0>1$, there exists a $c_*>0$ such that \eqref{1.1} has a positive monotone travelling wave solution if and only if $c\ge c_*$. Moreover, the dynamics of the corresponding ODE system with positive initial value is governed by $\mathcal{R}_0$. More precisely, when $\mathcal{R}_0<1$, $(0,0)$ is globally asymptotically stable; while when $\mathcal{R}_0>1$, there is a unique positive equilibrium $(u^*,v^*)$ that is uniquely given by
  \bes\label{1.2}
  au^*=H(v^*), ~ ~ bv^*=G(u^*),\ees
and is globally asymptotically stable.

If $H(v)=cv$, then system \eqref{1.1} reduces to
\bes\label{1.3}
u_{t}=d_1\Delta u-au+cv, ~ ~ v_{t}=d_2\Delta v-bv+G(u), t>0, ~ x\in\mathbb{R},
 \ees
where $G$ satisfies that $G\in C^2([0,\yy))$, $G(0)=0<G'(u)$ in $[0,\yy)$, $G(u)/u$ is strictly decreasing in $(0,\yy)$ and $\lim\limits_{u\to\yy}{G(u)}/{u}<{ab}/{c}$. The corresponding ODE system was first proposed in \cite{CP} to describe the 1973 cholera epidemic spread in the European Mediterranean regions.

When modeling epidemic, an important issue is to know where the spreading frontier of an epidemic is located, which naturally motivates us to discuss the systems, such as \eqref{1.1} and \eqref{1.3}, on the domain whose boundary is unknown and varies over time, instead of the fixed boundary domain or the whole space. Inspired by the work \cite{DL} where the Stefan boundary condition was incorporated into the mathematical model arising from ecology, Ahn et al \cite{ABL} studied the following variant of \eqref{1.3}
\bes\left\{\!\begin{array}{ll}\label{1.4}
u_t=d_1u_{xx}-au+cv, &t>0,~x\in(g(t),h(t)),\\[1mm]
v_t=-bv+G(u), &t>0,~x\in(g(t),h(t)),\\[1mm]
u(t,x)=v(t,x)=0, &t>0, \; x=g(t){\rm ~ or ~ }h(t),\\[1mm]
h'(t)=-\dd\mu u_x(t,h(t)), ~ g'(t)=-\dd\mu u_x(t,g(t)) & t>0,\\[1mm]
-g(0)=h(0)=h_0>0, ~ u(0,x)=u_{0}(x),~ v(0,x)=v_0(x),&-h_0\le x\le h_0.
 \end{array}\right.
 \ees
They showed \eqref{1.4} has a unique global solution, and its dynamics is governed by a spreading-vanishing dichotomy. More precisely,  as $t\to\yy$, either $[g(t),h(t)]$ converges to a  finite interval and $(u,v)\to(0,0)$, or $[g(t),h(t)]\to\mathbb{R}$ and $(u,v)\to(\tilde u^*,\tilde v^*)$ which is the unique positive constant steady state of \eqref{1.3}. Moreover, the criteria for spreading and vanishing were obtained by using some comparison arguments. Later, when spreading happens, the spreading speed was given by Zhao et al \cite{ZLN} whose arguments rely on a related semi-wave problem.

As we see, the dispersal of $v$ is ignored in \eqref{1.4} since it is assumed that the movement of $v$ is relatively small compared to that of $u$. Wang and Du \cite{WD} supposed that the diffusion of $v$ is also described by random diffusion, and thus studied the following problem
 \bes\left\{\!\begin{array}{ll}\label{1.5}
u_t=d_1u_{xx}-au+cv, &t>0,~x\in(g(t),h(t)),\\[1mm]
v_t=d_2v_{xx}-bv+G(u), &t>0,~x\in(g(t),h(t)),\\[1mm]
u(t,x)=v(t,x)=0, &t>0, \; x=g(t){\rm ~ or ~ }h(t),\\[1mm]
h'(t)=-\dd\mu[u_x(t,h(t))+\rho v_{x}(t,h(t))],& t>0,\\[1mm]
g'(t)=-\dd\mu [u_x(t,g(t))+\rho v_x(t,g(t))] ,& t>0,\\[1mm]
-g(0)=h(0)=h_0>0, ~ u(0,x)=u_{0}(x),~ v(0,x)=v_0(x),&-h_0\le x\le h_0.
 \end{array}\right.
 \ees
They obtained a rather complete understanding for the dynamics of \eqref{1.5}, including a spreading-vanishing dichotomy, criteria for spreading and vanishing as well as the spreading speed when spreading happens. Their results implies that in contrast with model \eqref{1.4}, the introduction of the diffusion of $v$ makes the spreading for \eqref{1.5} more difficult. To say concretely, for model \eqref{1.4}, if
 \[h_0\ge L^*:=\frac{\pi}{2}\sqrt{\frac{d_1b}{cG'(0)-ab}},\]
then spreading happens; for model \eqref{1.5}, if
 \[h_0\ge L^*_1:=\frac{\pi}{2}\sqrt{\frac{d_1b+d_2a+\sqrt{(d_1b+d_2a)^2+4d_1d_2(cG'(0)-ab)}}{2(cG'(0)-ab)}},\]
then spreading occurs for \eqref{1.5}. Clearly, $L^*_1>L^*$. Thus the critical size of initial habitat for \eqref{1.5} is larger than that of \eqref{1.4}.

There are a lot of papers involving the situations where one side of habitat is fixed and the other is free. If the boundary condition at the fixed side is the homogeneous Neumann boundary condition, then the problem is equivalent to the systems with the double free boundaries. If the boundary condition at the fixed side is the homogeneous Dirichlet boundary condition or mixed boundary condition, much different dynamics appear, including longtime behaviors and criteria for spreading and vanishing. For example, please see \cite{KY, Wjde14, Wjde15, WZrwa15, Wcnsns15, Wjfa16, ZWjdde18} and the references therein. Moreover, introducing the Stefan boundary condition to the epidemic model has been attracting much attention over the past decades, a small sample of which can be seen from \cite{LZ,WND,HW2,CLWY,ZRZ}. For recent developments on the applications of free boundary problem for the models from ecology can refer to the expository article \cite{Du}. Inspired by the above works, we consider the free boundary problem of \eqref{1.1}, which takes the form of
\bes\left\{\!\begin{array}{ll}\label{1.6}
u_t=d_1u_{xx}-au+H(v), &t>0,~x\in(0,h(t)),\\[1mm]
v_t=d_2v_{xx}-bv+G(u), &t>0,~x\in(0,h(t)),\\[1mm]
\mathbb{B}[u](t,0)=\mathbb{B}[v](t,0)=u(t,h(t))=v(t,h(t))=0, &t>0,\\[1mm]
h'(t)=-\mu_1 u_x(t,h(t))-\mu_2v_x(t,h(t)), & t>0,\\[1mm]
h(0)=h_0, ~ u(0,x)=u_{0}(x), ~ v(0,x)=v_0(x),&0\le x\le h_0,
 \end{array}\right.
 \ees
where $H$ and $G$ satisfy condition {\bf(H)}, operator $\mathbb{B}[w]=w$ or $w'$, as well as $u_0$ and $v_0$ meet with
   \begin{enumerate}
\item[{\bf(I)}]\; $w\in C^{2}([0,h_0])$, $w'(0)>0$, $w(h_0)=w(0)=0<w(x)$ in $(0,h_0)$ when $\mathbb B[w]=w$, $w(h_0)=w'(0)=0<w(x)$ in $[0,h_0)$ when $\mathbb{B}[w]=w'$.
 \end{enumerate}
The definition of operator $\mathbb{B}$ indicates that the homogeneous Dirichlet or Neumann boundary condition is imposed at the fixed boundary $x=0$, respectively, which brings about quite different dynamics.  Our main results are listed below.

\begin{theo}[Global existence and uniqueness]\label{t1.1}Problem \eqref{1.6} has a unique classical solution $(u,v,h)$ defined in all $t\ge0$, and $(u,v,h)\in [C^{1+\frac{\alpha}{2},2+\alpha}(D^T)]^2\times C^{1+\frac{\alpha}{2}}([0,T])$ for any $T>0$ and $\alpha\in(0,1)$, where $D^T=\{(t,x):0<t\le T, ~ 0\le x\le h(t)\}$. Moreover, there exist some $C_i>0$ with $i=1,2$, depending only on parameters in \eqref{1.6} and $(u_0,v_0)$, such that $0\le u\le C_1$ and $0\le v\le C_2$ for $(t,x)\in[0,\yy)\times[0,h(t)]$.
\end{theo}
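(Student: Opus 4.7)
The plan is to follow the standard three-step strategy used in \cite{DL, ABL, WD} for Stefan-type free-boundary parabolic systems: straighten the moving domain, run a contraction mapping for local well-posedness, and iterate after obtaining uniform $L^\infty$ bounds by comparison with an ODE.

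\emph{Local existence.} First I would transform $[0,h(t)]$ onto the fixed interval $[0,1]$ via $y=x/h(t)$, $\tilde u(t,y)=u(t,yh(t))$, $\tilde v(t,y)=v(t,yh(t))$, converting the system into
\[
\tilde u_t = \frac{d_1}{h^2}\tilde u_{yy} + \frac{yh'}{h}\tilde u_y - a\tilde u + H(\tilde v),
\]
the analogous equation for $\tilde v$, the boundary data $\tilde u(t,1)=\tilde v(t,1)=0$ and $\mathbb{B}[\tilde u](t,0)=\mathbb{B}[\tilde v](t,0)=0$, and the Stefan relation $h'(t)=-\frac{\mu_1}{h}\tilde u_y(t,1)-\frac{\mu_2}{h}\tilde v_y(t,1)$. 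On a closed ball in $[C^{1+\alpha/2,2+\alpha}([0,T]\times[0,1])]^2\times C^{1+\alpha/2}([0,T])$ I would define the map that solves the two linear parabolic equations with $h$ frozen and then updates $h$ via the Stefan formula; Schauder estimates and classical parabolic theory make this a contraction for $T$ small, yielding a unique local classical solution with the claimed regularity. The mixed boundary operator $\mathbb{B}$ at $y=0$ poses no added difficulty, since both Dirichlet and Neumann data fit the same Schauder framework.

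\emph{A priori $L^\infty$ bounds.} Nonnegativity of $u,v$ follows from $H(0)=G(0)=0$ and the strong maximum principle applied to each equation (treating the coupling as a nonnegative source since $H,G$ are increasing). For the upper bound, choose $M_1\ge\|u_0\|_\infty$, $M_2\ge\|v_0\|_\infty$ and let $(\bar u(t),\bar v(t))$ solve the ODE
\[
\bar u' = -a\bar u + H(\bar v), \qquad \bar v' = -b\bar v + G(\bar u),\qquad (\bar u(0),\bar v(0))=(M_1,M_2).
\]
Hypothesis (H), in particular $G(H(\hat z)/a)<b\hat z$ together with the concavity of $H,G$, implies that every forward trajectory remains bounded (as already noted in \cite{HY}), so $\bar u\le C_1$, $\bar v\le C_2$ uniformly in $t\ge 0$. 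The spatially constant pair $(\bar u,\bar v)$ is a supersolution of the PDE system, compatible with either form of $\mathbb{B}$ at $x=0$ and with the Dirichlet condition at $x=h(t)$ since $\bar u,\bar v\ge 0=u=v$ there. Because $H',G'>0$ the system is cooperative, and the standard comparison principle delivers $u\le C_1$, $v\le C_2$ throughout the existence interval.

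\emph{Global extension and main obstacle.} The uniform $L^\infty$ bound combined with parabolic Schauder estimates up to the moving boundary yields uniform control of $\tilde u_y(t,1),\tilde v_y(t,1)$ on any finite time interval, hence of $h'(t)$, so $h$ cannot blow up in finite time. The local existence theorem can therefore be restarted at any $T_0<\infty$ with uniformly controlled data, and patching yields a global classical solution on $[0,\infty)$; uniqueness at every step transfers to the whole half-line. The one technical pressure point is extracting the Schauder estimate at the free boundary with constants depending only on the $L^\infty$ norms of $(u,v)$ and the parameters in \eqref{1.6}, and not circularly on $\|h'\|_\infty$ — this is where the straightening transformation and the Stefan formula must be analyzed jointly, and it is here that I expect the main effort to be concentrated.
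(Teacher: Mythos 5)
Your proposal follows exactly the route the paper itself invokes: the paper gives no proof of Theorem 1.1 at all, stating only that it ``can be proved by virtue of analogous arguments to those in [WD, Wdcdsb2021]'', and those references implement precisely your three steps (straightening $y=x/h(t)$, contraction mapping in a H\"older ball for small $T$, $L^\infty$ bounds by comparison, then continuation). Two small remarks. First, for the upper bound you do not need the time-dependent ODE trajectory: condition {\bf(H)} (concavity of $G(H(\cdot)/a)$ through the origin plus $G(H(\hat z)/a)<b\hat z$) gives $G(H(x)/a)<bx$ for all $x\ge\hat z$, so one can directly pick constants $K_2\ge\max\{\hat z,\|v_0\|_\infty\}$ with $H(K_2)/a\ge\|u_0\|_\infty$ and $G(H(K_2)/a)\le bK_2$, making the constant pair $(K_1,K_2)=(H(K_2)/a,K_2)$ a stationary supersolution -- this is exactly the construction the paper records later in \eqref{2.11a}. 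Second, the ``pressure point'' you flag (bounding $h'$ without circular dependence on $\|h'\|_\infty$) is not resolved by Schauder estimates in the literature; the standard device, going back to Du--Lin, is an explicit quadratic barrier $w(t,x)=C_1\bigl[2M(h(t)-x)-M^2(h(t)-x)^2\bigr]$ on the strip $h(t)-M^{-1}<x<h(t)$, with $M$ depending only on the $L^\infty$ bounds and the coefficients; this dominates $u$ (and its analogue dominates $v$) and yields $-u_x(t,h(t))\le 2MC_1$, hence $0<h'(t)\le \mu_1\,2MC_1+\mu_2\,2MC_2$ independently of $T$. With that substitution your argument is complete and is the intended proof.
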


By virtue of the strong maximum principle and Hopf lemma for parabolic equations, we see that $u(t,x)>0$, $v(t,x)>0$ and $h'(t)>0$ for $t>0$ and $x\in(0,h(t))$, which implies that $h_{\yy}:=\lim_{t\to\yy}h(t)$ is well defined, and $h_{\yy}\in[h_0,\yy]$. We call the case $h_{\yy}<\yy$ {\it vanishing}, and the case $h_{\yy}=\yy$ {\it spreading}. The longtime behaviors of $(u,v,h)$ are given as follows.

\begin{theo}[Spreading-vanishing dichotomy]\label{t1.2}Let $(u,v,h)$ be the unique solution of \eqref{1.6}. Then one of the following alternatives must happen:

{\it $\underline {Spreading}$:} necessarily $\mathcal{R}_0>1$, $h_{\yy}=\yy$, $\lim_{t\to\yy}(u,v)=(U,V)$ in $C_{\rm loc}([0,\yy))$ if operator $\mathbb{B}[w]=w$, and $\lim_{t\to\yy}(u,v)=(u^*,v^*)$ in $C_{\rm loc}([0,\yy))$ if operator $\mathbb{B}[w]=w'$, where $(u^*,v^*)$ is uniquely given by \eqref{1.2} and $(U,V)$ is the unique bounded positive solution of \eqref{2.8};

{\it $\underline {Vanishing}$:} $h_{\yy}<\yy$, $\lim_{t\to\yy}\|u(t,x)+v(t,x)\|_{C([0,h(t)])}=0$ and $\lambda(h_{\yy})\ge0$, where $\lambda(h_{\yy})$ is the principal eigenvalue of \eqref{2.3}. Moreover, $\lim_{t\to\yy}{\rm e}^{kt}\|u(t,x)+v(t,x)\|_{C([0,h(t)])}=0$ for all  $k\in(0,\lambda(h_{\yy}))$ if $\lambda(h_{\yy})>0$.
\end{theo}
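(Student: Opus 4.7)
The plan is to dichotomize on whether $h_\infty\in[h_0,\infty]$ is finite (vanishing) or infinite (spreading) and then identify the long-time limit of $(u,v)$ in each case. I would use throughout the uniform $L^\infty$ bounds from Theorem \ref{t1.1}, parabolic Schauder estimates on the moving cylinder, the comparison principle for the cooperative system \eqref{1.6}, together with the steady-state analysis for \eqref{2.8} and the spectral properties of the eigenvalue problem \eqref{2.3} developed in Section 2.

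\textbf{Vanishing case ($h_\infty<\infty$).} I would first prove $\lambda(h_\infty)\ge 0$ by contradiction: if $\lambda(h_\infty)<0$, monotone dependence of the principal eigenvalue on the domain length gives some $t_0$ with $\lambda(h(t_0))<0$, and a small multiple $\varepsilon(\phi,\psi)$ of the positive eigenfunction pair on $[0,h(t_0)]$ is then a time-independent sub-solution. Comparison yields $\liminf_{t\to\infty}(u,v)\ge\varepsilon(\phi,\psi)>0$ on $[0,h(t_0)]$, whence by Hopf's lemma applied to a fixed-domain comparison problem $-u_x(t,h(t))-v_x(t,h(t))$ stays bounded below by a positive constant, so $\int_0^\infty h'(t)\,dt=\infty$, contradicting $h_\infty<\infty$. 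Next, to obtain uniform decay, I would use compactness: parabolic regularity delivers uniform $C^{1+\alpha/2,\,2+\alpha}$-bounds on each strip $\{(t,x):T\le t\le T+1,\ 0\le x\le h(t)\}$, so any sequence $t_n\to\infty$ has a subsequential limit $(\tilde u,\tilde v)$ on $[0,h_\infty]$ solving the steady-state system with zero Dirichlet data at $x=h_\infty$. By $\lambda(h_\infty)\ge 0$ and the cooperative monotone structure under \textbf{(H)}, the only such nonnegative bounded solution is $(0,0)$, so the full sequence converges. The exponential rate when $\lambda(h_\infty)>0$ follows from the super-solution $Me^{-kt}(\phi,\psi)$ with $(\phi,\psi)$ the positive eigenfunction on $[0,h_\infty]$ and $0<k<\lambda(h_\infty)$, using the sublinear bounds $H(s)\le H'(0)s$ and $G(s)\le G'(0)s$ from \textbf{(H)}.

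\textbf{Spreading case ($h_\infty=\infty$).} To see that $\mathcal{R}_0>1$, I would note that if $\mathcal{R}_0\le 1$ then $(0,0)$ is globally stable for the spatially homogeneous cooperative ODE, so comparison with the ODE upper-bound gives $(u,v)\to(0,0)$ uniformly in $x$; a standard barrier argument at the free boundary then delivers $h'\in L^1(0,\infty)$, whence $h_\infty<\infty$, a contradiction. Identifying the spatial limit is the heart of the proof and is carried out by the upper--lower solution method on expanding fixed domains. For large $L$, choose $T_L$ with $h(T_L)>L$; comparing $(u,v)$ on $[T_L,\infty)\times[0,L]$ with the auxiliary fixed-boundary problem on $[0,L]$ (with $\mathbb{B}$-condition at $x=0$ and zero Dirichlet at $x=L$), and letting $t\to\infty$ in that auxiliary problem, yields a steady state $(u_L,v_L)$; sending $L\to\infty$ and invoking the steady-state results of Section 2 gives $\liminf_{t\to\infty}(u,v)\ge (U,V)$ if $\mathbb{B}[w]=w$ and $\liminf_{t\to\infty}(u,v)\ge(u^*,v^*)$ if $\mathbb{B}[w]=w'$. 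The matching upper bound comes from comparing with the corresponding half-line problem (or, in the Neumann case, directly with the spatially constant ODE tending to $(u^*,v^*)$) and using its stabilization to the same steady state. Parabolic regularity then upgrades pointwise squeezing to convergence in $C_{\rm loc}([0,\infty))$.

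\textbf{Main obstacle.} The principal difficulty is the Dirichlet case $(u,v)\to(U,V)$: it demands that the elliptic system \eqref{2.8} on the half-line have a \emph{unique} bounded positive solution, and that the ascending family $(u_L,v_L)$ actually converge monotonically to $(U,V)$ as $L\to\infty$. Both should follow from the cooperative monotone structure under \textbf{(H)} together with the careful analysis of \eqref{2.8} promised in Section 2, but squeezing the long-time limit exactly between the increasing $L$-domain lower bound and a decreasing half-line upper bound must be executed with care. A secondary technicality is the borderline vanishing case $\lambda(h_\infty)=0$: no exponential super-solution is available there, so convergence is only in the $C$-norm and must be extracted from the rigidity of the trivial limit profile rather than from a quantitative rate.
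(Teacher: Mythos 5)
Your overall architecture coincides with the paper's (which proves the theorem through Lemmas \ref{l3.1}--\ref{l3.3}): dichotomize on $h_\yy$, squeeze $(u,v)$ between fixed-domain auxiliary problems whose large-time limits are the steady states of Section 2, and use the eigenfunction supersolution $M{\rm e}^{-kt}(\phi,\psi)$ for the exponential rate. The genuine differences lie in three sub-arguments. (i) For vanishing, the paper first proves $\|u+v\|_{C([0,h(t)])}\to0$ by reducing to a scalar inequality for $(u,h)$ and invoking \cite[Lemma 3.3]{LLW}, and only afterwards deduces $\lambda(h_\yy)\ge0$ by contradiction with the stabilization of a sub-flow to the positive steady state of \eqref{2.6}; you reverse the order, getting $\lambda(h_\yy)\ge0$ first from a time-independent subsolution plus a uniform Hopf estimate forcing $h'$ to stay positive. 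That reversal is legitimate, but your decay step then has a soft spot: a subsequential limit along $t_n\to\yy$ is an entire (eternal) solution of the parabolic system on $\mathbb{R}\times[0,h_\yy]$, not automatically a steady state, so ``the only such solution is $(0,0)$'' requires the additional standard argument that any bounded entire solution is dominated by the nonincreasing flow started from the constants $(K_1,K_2)$, which tends to the maximal nonnegative steady state, namely $(0,0)$ when $\lambda(h_\yy)\ge0$. (ii) For the necessity of $\mathcal{R}_0>1$, the paper's Lemma \ref{l3.3} is a one-line integral identity yielding an explicit bound on $h_\yy$ with no decay information needed; your route (ODE comparison gives $(u,v)\to(0,0)$, then ``a barrier argument delivers $h'\in L^1$'') is incomplete as stated, since uniform decay without a rate does not give summability of $h'$ --- you would have to restart at a large time with small $C^1$ data and re-run the shrinking supersolution of Lemma \ref{l3.6}. (iii) For the Dirichlet spreading upper bound you propose a half-line comparison problem with large constant data, whereas the paper deliberately uses the finite-domain problem \eqref{2.12} with boundary values $(K_1,K_2)$ (this is the raison d'\^etre of Lemma \ref{l2.4}) to avoid half-line parabolic theory; both families converge to $(U,V)$ by the uniqueness established in Lemma \ref{l2.3}. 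None of these differences is fatal, but items (i) and (ii) need the indicated repairs before your sketch becomes a proof.
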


Our next result gives a rather complete description of the criteria for spreading and vanishing.

\begin{theo}\label{t1.3}Let $(u,v,h)$ be the unique solution of \eqref{1.6}. Then the following statements hold.
\begin{enumerate}[$(1)$]
\item If $\mathcal{R}_0\le1$, vanishing happens.
\item Suppose $\mathcal{R}_0>1$. Then there exists a unique critical length $l_0$ for initial habitat $[0,h_0]$ such that spreading happens if $h_0\ge l_0$, where $l_0$ is uniquely given in \eqref{3.3}.
\item Assume that $\mathcal{R}_0>1$, $h_0<l_0$ and $\mu_2=Q(\mu_1)$ with $Q\in C([0,\yy))$, $Q(0)=0$ and strictly increasing to $\yy$. Then there exists a unique $\mu^*_1$ such that spreading happens if and only if $\mu_1>\mu^*_1$.
\item Let $\mathcal{R}_0>1$ and $h_0<l_0$. We parameterize the initial data $(u_0,v_0)=(\tau\vartheta_1,\tau\vartheta_2)$ with $\tau>0$ and $(\vartheta_1,\vartheta_2)$ satisfying {\bf(I)}. Then there exists a unique $\tau^*$ such that spreading happens if and only if $\tau>\tau^*$.
\end{enumerate}
\end{theo}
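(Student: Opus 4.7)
The plan is to deduce each part of Theorem \ref{t1.3} from three ingredients: the principal eigenvalue analysis of the linearization \eqref{2.3}, the spreading-vanishing dichotomy of Theorem \ref{t1.2}, and standard monotone/continuous dependence of the free-boundary solution on its parameters. For part (1), assume $\mathcal{R}_0\le 1$. The spatially constant pair $(\bar u(t),\bar v(t))$ solving the kinetic ODE $\bar u'=-a\bar u+H(\bar v)$, $\bar v'=-b\bar v+G(\bar u)$ with data $(\|u_0\|_\infty,\|v_0\|_\infty)$ is an upper solution of \eqref{1.6}, and by \cite{HY} it tends to $(0,0)$ since $(0,0)$ is the unique globally stable equilibrium when $\mathcal R_0\le 1$; comparison then yields $(u,v)\to 0$ uniformly. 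To upgrade this to $h_\infty<\infty$, I would construct an exponentially decaying Du--Lin type upper solution $(\bar u(t,x),\bar v(t,x),\bar h(t))$ from the principal eigenfunction of \eqref{2.3} on a suitable finite interval, exploiting that $\lambda(h)>0$ for every $h>0$ in this regime.

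For part (2), let $\lambda(h)$ denote the principal eigenvalue of \eqref{2.3} on $(0,h)$. Standard Krein--Rutman arguments yield that $\lambda(\cdot)$ is continuous, strictly decreasing, with $\lambda(0^+)=+\infty$ and $\lim_{h\to\infty}\lambda(h)<0$ exactly because $\mathcal R_0>1$; hence a unique $l_0\in(0,\infty)$ satisfies $\lambda(l_0)=0$, with explicit form \eqref{3.3}. If $h_0\ge l_0$, then since $h'(t)>0$ for $t>0$ we have $h_\infty>h_0\ge l_0$, giving $\lambda(h_\infty)<0$; but the vanishing alternative of Theorem \ref{t1.2} requires $\lambda(h_\infty)\ge 0$. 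Hence spreading must occur.

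Parts (3) and (4) follow a common pattern. Fix $\mathcal R_0>1$ and $h_0<l_0$, and write $(u^\mu,v^\mu,h^\mu)$ for the solution with $\mu_1=\mu$, $\mu_2=Q(\mu)$; let $S:=\{\mu>0:\text{spreading occurs}\}$. The monotonicity $\mu<\tilde\mu\Rightarrow h^\mu(t)\le h^{\tilde\mu}(t)$ together with $u^\mu\le u^{\tilde\mu}$, $v^\mu\le v^{\tilde\mu}$ on the common time-space domain follows from the classical free-boundary comparison after straightening the moving front, so $S$ is upward-closed. For small $\mu$, I would pick $h^*\in(h_0,l_0)$, use $\lambda(h^*)>0$, and build an exponentially decaying upper solution whose moving boundary stays below $h^*$, forcing $h^\mu_\infty\le h^*<l_0$ and hence vanishing; for large $\mu$, I would use an eigenfunction-based lower solution on an interval of length $L>l_0$ to drive $h^\mu$ past $l_0$ in finite time and then invoke part (2) after a restart. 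Part (4), with the initial-data scaling $\tau$ replacing $\mu$, follows identically: monotonicity in $\tau$ is immediate, vanishing for small $\tau$ comes from a linear-in-$\tau$ upper solution, and spreading for large $\tau$ from a suitable eigenfunction-based lower solution.

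The main obstacle is sharpness — namely showing $S=(\mu_1^*,\infty)$ rather than $[\mu_1^*,\infty)$. I would prove that $S$ is open: if $\mu_0\in S$ then $h^{\mu_0}(T)>l_0$ for some $T>0$, and by continuity of $\mu\mapsto(u^\mu,v^\mu,h^\mu)$ on $[0,T]$ — obtained via uniform Schauder estimates after a $\mu$-dependent change of variables to a fixed reference domain — one has $h^\mu(T)>l_0$ for $\mu$ near $\mu_0$; applying part (2) from the new initial time $T$ yields spreading for such $\mu$. The analogous argument in $\tau$ handles part (4). A subtlety worth flagging is that the continuity argument must be carried out uniformly for both boundary options $\mathbb B[w]=w$ and $\mathbb B[w]=w'$ at $x=0$, though the structural proof is unchanged.
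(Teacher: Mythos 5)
Parts (2)--(4) of your outline are essentially the paper's own route: the unique root $l_0$ of $\lambda(\cdot)=0$ together with $h'(t)>0$ and the constraint $\lambda(h_\infty)\ge 0$ from the vanishing alternative gives (2); monotone dependence on $\mu_1$ (resp.\ $\tau$) via the free-boundary comparison principle, vanishing for small parameters via a Du--Lin upper solution, spreading for large parameters via a lower solution, and openness of the spreading set via continuous dependence (if $h(T)>l_0$ at $\mu_0$ then $h(T)>l_0$ nearby, and (2) restarted at time $T$ applies) give (3) and (4). These match Lemmas 3.5--3.9 of the paper, with only cosmetic differences (the paper gets spreading for large $\mu_1$ by dropping $H(v)\ge0$ and quoting a scalar result, and for large $\tau$ by an explicit lower solution with $\underline h(t)=\sqrt{t+\varepsilon}$).

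The genuine gap is in part (1). Your ODE comparison does show $(u,v)\to(0,0)$ uniformly, but the passage from this to $h_\infty<\infty$ is the entire content of the claim, and the step you sketch does not close it. A Du--Lin type upper solution built from the eigenfunction on $[0,h(T)]$ carries two quantitative constraints: the initial domination $u(T,\cdot)\le M\phi$, $v(T,\cdot)\le M\psi$ (which, because $\phi,\psi$ vanish at $x=h(T)$, forces control of the boundary gradients $u_x(T,h(T)), v_x(T,h(T))$, not merely of $\|u\|_\infty+\|v\|_\infty$), and the Stefan inequality, which requires $\mu_1+\mu_2\le \delta^2 h(T)/\big(M\,C(h(T))\big)$ with $\delta$ limited by $\lambda(h(T))$. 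You cannot choose the interval $[0,h(T)]$ ``suitably'' without already knowing $h$ stays bounded, and in the borderline case $\mathcal R_0=1$ your premise ``$\lambda(h)>0$ for every $h$'' degenerates: $\lambda(l)\to 0$ as $l\to\infty$, so there is no uniform positive eigenvalue to feed into $\delta$, and the required rates need not be compatible. (Also, the global attractivity of $(0,0)$ for the kinetic ODE is stated in \cite{HY} for $\mathcal R_0<1$; the case $\mathcal R_0=1$ needs a separate word using strict concavity of $H,G$.) The paper avoids all of this with a one-line energy identity: using $H(v)\le H'(0)v$ and $H'(0)G(u)/b\le au$ (equivalent to $\mathcal R_0\le1$), one gets
\begin{equation*}
\frac{\rm d}{{\rm d}t}\int_{0}^{h(t)}\Big(u+\tfrac{H'(0)}{b}v\Big)\,{\rm d}x\;\le\; d_1u_x(t,h(t))+\tfrac{H'(0)d_2}{b}v_x(t,h(t))\;\le\;-\frac{\min\{d_1,\tfrac{H'(0)d_2}{b}\}}{\max\{\mu_1,\mu_2\}}\,h'(t),
\end{equation*}
and integrating bounds $h(t)$ explicitly for all $\mu_1,\mu_2$ and all $\mathcal R_0\le1$. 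You should replace your part (1) argument by this identity (or supply the missing quantitative construction, which would be considerably harder).
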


\begin{remark}\label{r1.1}The spreading speed of \eqref{1.6} will be considered in a separate work. It is expected that when $\mathbb{B}[w]=w$, the spreading speed is non-trivial since the solution component $(u,v)$ converges to a non-constant steady state solution of \eqref{2.8}. More accurate estimates for $(u,v)$, such as \cite[Lemma 2.3]{DMZ} or \cite[Lemma 6.5]{DLou}, are needed. Additionally, motivated by the recent work \cite{WND1}, we intend to obtain some sharp estimates for $(u,v,h)$ when spreading happens in the future.
\end{remark}

This paper is arranged as follows. Section 2 involves some preliminary works, including a comparison principle for free boundary problems, an eigenvalue problem and a steady state problem. Section 3 concerns the dynamics of \eqref{1.6}, consisting of a spreading-vanishing dichotomy and criteria for spreading and vanishing.

\section{Some preliminary works}{\setlength\arraycolsep{2pt}

In this section, we discuss a comparison principle for free boundary problems, an eigenvalue problem and a steady state problem, respectively, which will pave the road for the investigation for the dynamics of \eqref{1.6}. Let us begin with stating a comparison principle, whose proof is similar to \cite[Lemma 2.3]{WD} or \cite[Proposition 3.13]{WND}.

\begin{lem}[Comparison principle]\label{l2.1} Let $(\bar{u},\bar{v},\bar{h})\in [C^{1,2}(\Omega_T)\bigcap C(\overline{\Omega_T})]\times C^1([0,T])$ for $T>0$, and satisfy
\bes\left\{\!\begin{array}{ll}\label{2.1}
\bar u_t\ge d_1\bar u_{xx}-a\bar u+H(\bar v), &0<t\le T,~x\in(0,\bar h(t)),\\[1mm]
\bar v_t\ge d_2\bar v_{xx}-b\bar v+G(\bar u), &0<t\le T,~x\in(0,\bar h(t)),\\[1mm]
\mathbb{B}[\bar u](t,0)\ge0, ~ \mathbb{B}[\bar v](t,0)\ge0, ~ \bar u(t,\bar h(t))\ge0, ~ \bar v(t,\bar h(t))\ge0, &0<t\le T,\\[1mm]
\bar h'(t)\ge-\mu_1 \bar u_x(t,\bar h(t))-\mu_2\bar v_x(t,\bar h(t)), & 0<t\le T,\\[1mm]
\bar h(0)\ge h_0, ~ \bar u(0,x)\ge u_{0}(x), ~ \bar v(0,x)\ge v_0(x),&0\le x\le h_0,
 \end{array}\right.
 \ees
 where $\Omega_T=\{(t,x):0<t\le T, ~ 0<x<\bar{h}(t)\}$. Then the unique solution $(u,v,h)$ of \eqref{1.6} satisfies
 \[h(t)\le \bar{h}(t), ~ u(t,x)\le \bar{u}(t,x),~ v(t,x)\le \bar{v}(t,x) ~ {\rm for ~ }t\in[0,T], ~ x\in[0,h(t)].\]
\end{lem}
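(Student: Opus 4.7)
The overall plan is to execute the familiar two-step scheme for Stefan-type free boundary comparison: first force the inclusion of domains $h(t)\le\bar h(t)$, and then apply a parabolic maximum principle for the weakly coupled cooperative system satisfied by the differences to conclude $u\le\bar u$ and $v\le\bar v$.

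For the domain inclusion I would argue by contradiction. Set $t^*=\sup\{t\in[0,T]:h(s)\le\bar h(s)\text{ for all }s\in[0,t]\}$ and suppose $t^*<T$. By continuity $h(t^*)=\bar h(t^*)$ and one can pick a sequence $t_n\downarrow t^*$ with $h(t_n)>\bar h(t_n)$. On $E_{t^*}=\{(t,x):0<t\le t^*,\ 0<x<h(t)\}$, introduce $U=\bar u-u$ and $V=\bar v-v$. Applying the mean value theorem together with condition {\bf(H)}, these differences satisfy
\[
U_t\ge d_1U_{xx}-aU+H'(\xi)V,\qquad V_t\ge d_2V_{xx}-bV+G'(\eta)U,
\]
with $H'(\xi),G'(\eta)\ge0$, so the linearised system is weakly coupled and cooperative. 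The boundary data obey $\mathbb B[U](t,0),\mathbb B[V](t,0)\ge0$ and, since $h(t)\le\bar h(t)$ on $[0,t^*]$, $U(t,h(t))=\bar u(t,h(t))\ge0$ and $V(t,h(t))=\bar v(t,h(t))\ge0$; together with $U(0,\cdot),V(0,\cdot)\ge0$, the standard maximum principle for cooperative parabolic systems then yields $U,V\ge0$ on $E_{t^*}$.

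At the touching point $(t^*,h(t^*))=(t^*,\bar h(t^*))$, $U$ and $V$ attain the interior minimum value zero on $E_{t^*}$. The Hopf lemma gives $U_x(t^*,h(t^*))<0$ and $V_x(t^*,h(t^*))<0$, i.e.\ $\bar u_x<u_x$ and $\bar v_x<v_x$ at that point. Inserting this into the Stefan conditions,
\[
\bar h'(t^*)\ge-\mu_1\bar u_x(t^*,\bar h(t^*))-\mu_2\bar v_x(t^*,\bar h(t^*))>-\mu_1u_x(t^*,h(t^*))-\mu_2v_x(t^*,h(t^*))=h'(t^*),
\]
so $\bar h-h$ is strictly increasing at $t^*$, contradicting the existence of $t_n\downarrow t^*$ with $\bar h(t_n)<h(t_n)$. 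Hence $t^*=T$, $h\le\bar h$ on $[0,T]$, and the cooperative maximum principle applied on the full interval completes the comparison of $(u,v)$.

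The main obstacle I expect is justifying the strict Hopf step at the touching point: if either component of $(U,V)$ happens to vanish identically on $E_{t^*}$, the componentwise Hopf lemma cannot be invoked directly. Following \cite[Lemma 2.3]{WD} and \cite[Proposition 3.13]{WND}, the standard remedy is to first perturb $(\bar u,\bar v,\bar h)$ into a genuine strict supersolution, for instance by replacing $\bar h(t)$ with $\bar h(t)+\varepsilon(1+t)$ and enlarging $(\bar u,\bar v)$ slightly so that all inequalities in \eqref{2.1} become strict, derive $h(t)\le\bar h(t)+\varepsilon(1+t)$ without any Hopf appeal, and then send $\varepsilon\to0^+$. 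A secondary technical point is that the Hopf lemma at the moving corner $(t,h(t))$ requires an interior sphere/cone condition, which is guaranteed by the regularity $\bar h\in C^1([0,T])$ and the $C^{1+\alpha/2}$ regularity of $h$ from Theorem~\ref{t1.1}.
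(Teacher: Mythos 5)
Your overall scheme---domain inclusion via a first-touching-time argument, the cooperative parabolic maximum principle for the differences, and a strict perturbation to rescue the Hopf step in the degenerate case---is exactly the standard route; the paper offers no proof of its own and simply points to \cite[Lemma 2.3]{WD} and \cite[Proposition 3.13]{WND}, which proceed the same way. Two supporting steps should be made explicit: you first need $\bar u,\bar v\ge0$ on all of $\overline{\Omega_T}$ (maximum principle applied to the cooperative system satisfied by $(\bar u,\bar v)$ alone), since otherwise $H(\bar v)$ and $G(\bar u)$ are not even defined ($H,G\in C^2([0,\infty))$) and your claim that $U(t,h(t))=\bar u(t,h(t))\ge0$ at points where $h(t)<\bar h(t)$ is unsupported.

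The genuine gap is at the corner. You assert that $U$ and $V$ attain the minimum value zero at $(t^*,h(t^*))$, but $U(t^*,h(t^*))=\bar u(t^*,\bar h(t^*))$, and the hypothesis only gives $\bar u(t,\bar h(t))\ge0$; if $\bar u(t^*,\bar h(t^*))>0$ the corner is not a minimum point of $U$, the Hopf lemma says nothing about the sign of $U_x$ there, and the chain leading to $\bar h'(t^*)>h'(t^*)$ collapses. This is not a removable technicality: with ``$\ge0$'' on $x=\bar h(t)$ the statement is actually false (when $H$ and $G$ are bounded, take $\bar u\equiv\bar v\equiv M$ large and $\bar h\equiv h_0$; every inequality in \eqref{2.1} holds, yet $h(t)>h_0$ for $t>0$). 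The hypothesis must be read, as in the cited references and in every application made in this paper (e.g.\ the upper solution built in Lemma \ref{l3.6} satisfies $\bar u(t,\bar h(t))=\bar v(t,\bar h(t))=0$), with equality on the free boundary; under that reading your Hopf step is correct. A last remark on your $\varepsilon$-perturbation: enlarging the given $(\bar u,\bar v,\bar h)$ forces you to extend these functions beyond $\Omega_T$, which is awkward; the cleaner standard device is to perturb the solution instead, i.e.\ compare $(\bar u,\bar v,\bar h)$ with the solution of \eqref{1.6} for data $(1-\varepsilon)h_0$, $(1-\varepsilon)(u_0,v_0)$ and $(1-\varepsilon)\mu_i$, which satisfies the strict inequalities at the touching time, and then let $\varepsilon\to0^+$ using continuous dependence.
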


We usually call $(\bar{u},\bar{v},\bar{h})$ in the above lemma an upper solution for \eqref{1.6}. If we reverse all the inequalities in \eqref{2.1}, then we can define a lower solution. Moreover, from Lemma \ref{l2.1}, it follows that the unique solution $(u,v,h)$ of \eqref{1.6} is strictly increasing with respect to the initial functions $u_0$ and $v_0$, as well as $\mu_i$ for $i=1,2$.

Consider the following eigenvalue problem
\bes\label{2.2}
-\omega''=\nu\omega(x) , ~ ~ x\in(0,l); ~ ~ ~ \mathbb{B}[\omega](0)=\omega(l)=0.
\ees
Simple calculations show that the unique principal eigenpair $(\nu_1,\omega)$ of \eqref{2.2} is given by
 \bess\left\{\!\begin{array}{ll}
\dd\nu_1=\frac{\pi^2}{l^2}, ~ ~ \omega=\sin\frac{\pi}{l}x ~ ~ ~ {\rm if ~ }\mathbb{B}[\omega](0)=\omega(0)=0,\\
\dd\nu_1=\frac{\pi^2}{4l^2}, ~ ~ \omega=\cos\frac{\pi}{2l}x ~ ~ ~ {\rm if ~ }\mathbb{B}[\omega](0)=\omega'(0)=0.
 \end{array}\right.
 \eess

 Let $l>0$. We study the eigenvalue problem
 \bes\left\{\!\begin{array}{ll}\label{2.3}
-d_1\phi''+a\phi-H'(0)\psi=\lambda \phi, &x\in(0,l),\\[1mm]
 -d_2\psi''+b\psi-G'(0)\phi=\lm\psi, &x\in(0,l),\\[1mm]
\mathbb{B}[\phi](0)=\mathbb{B}[\psi](0)=\phi(l)=\psi(l)=0.
 \end{array}\right.
 \ees
\begin{lem}\label{l2.2}The eigenvalue problem \eqref{2.3} has a unique eigenvalue $\lambda$ with a positive eigenfunction $(\phi,\psi)$. More precisely,
\bes\left\{\!\begin{array}{ll}\label{2.4}
\dd\lambda=\frac {d_1\nu_1+a+d_2\nu_1+b-\sqrt{\big(d_1\nu_1+a-d_2\nu_1-b\big)^2
 +4G'(0)H'(0)}}2,\\[3mm]
 \dd\phi=\frac{H'(0)}{d_1\nu_1+a-\lambda}\sin\frac{\pi}{l}x, ~ ~ \psi=\sin\frac{\pi}{l}x ~ ~ ~ {\rm if ~ operator ~ }\mathbb{B}[w]=w,\\[4mm]
 \dd\phi=\frac{H'(0)}{d_1\nu_1+a-\lambda}\cos\frac{\pi}{2l}x, ~ ~ \psi=\cos\frac{\pi}{2l}x ~ ~ ~ {\rm if ~ operator ~ }\mathbb{B}[w]=w',
 \end{array}\right.
\ees
where $\nu_1$ is the principal eigenvalue of \eqref{2.2}.
\end{lem}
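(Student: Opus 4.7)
The plan is to construct the eigenpair by separation of variables against the scalar Laplacian eigenfunction $\omega$ from \eqref{2.2}, and then appeal to the cooperative structure of \eqref{2.3} for uniqueness. Since the boundary conditions on $(\phi,\psi)$ coincide with those on $\omega$, I would try the ansatz $\phi=A\omega$, $\psi=B\omega$. Using $-\omega''=\nu_1\omega$, problem \eqref{2.3} collapses to the $2\times 2$ linear system
\[
(d_1\nu_1+a-\lambda)A-H'(0)B=0,\qquad -G'(0)A+(d_2\nu_1+b-\lambda)B=0.
\]
A nontrivial $(A,B)$ exists iff the determinant vanishes, which gives a quadratic in $\lambda$ with roots
\[
\lambda_\pm=\tfrac{1}{2}\bigl[(d_1\nu_1+a)+(d_2\nu_1+b)\pm\sqrt{(d_1\nu_1+a-d_2\nu_1-b)^2+4G'(0)H'(0)}\bigr].
\]
I would select $\lambda=\lambda_-$ and normalize $B=1$, so that $A=H'(0)/(d_1\nu_1+a-\lambda)$; comparing with the two cases for $\omega$ in \eqref{2.2} recovers exactly the formulas in \eqref{2.4}.

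Next I would verify that the resulting $(\phi,\psi)$ is componentwise positive on $(0,l)$. Since $\omega>0$ on $(0,l)$, it suffices to check $A>0$, i.e.\ $d_1\nu_1+a-\lambda_->0$. A direct computation yields
\[
d_1\nu_1+a-\lambda_-=\tfrac12\bigl[(d_1\nu_1+a-d_2\nu_1-b)+\sqrt{(d_1\nu_1+a-d_2\nu_1-b)^2+4G'(0)H'(0)}\bigr],
\]
and the square root strictly dominates $|d_1\nu_1+a-d_2\nu_1-b|$ because $G'(0)H'(0)>0$ by hypothesis \textbf{(H)}. Hence $A>0$ and $(\phi,\psi)$ is positive on $(0,l)$, as claimed.

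For uniqueness among eigenvalues admitting a positive eigenfunction, I would invoke Krein--Rutman theory for cooperative elliptic systems: \eqref{2.3} is irreducibly cooperative since the coupling coefficients $H'(0)$ and $G'(0)$ are strictly positive, so the resolvent of the associated operator is compact and strongly positive on the cone of componentwise nonnegative functions, whence there is a unique simple principal eigenvalue carrying a componentwise positive eigenfunction. Thus the $\lambda$ produced above is the only one. I expect the bulk of the argument to be essentially algebraic bookkeeping; the only subtle point is the uniqueness statement, which the Krein--Rutman framework handles cleanly and which is the main obstacle to writing a fully self-contained proof.
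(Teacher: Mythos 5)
Your construction of the eigenpair is exactly the paper's: the same separation-of-variables ansatz $(\phi,\psi)=(A\omega,B\omega)$, the same reduction to a $2\times2$ algebraic eigenvalue problem, the same selection of $\lambda_-$, and your explicit verification that $d_1\nu_1+a-\lambda_->0$ (hence $A>0$) makes precise a step the paper dispatches with ``it is easy to show.'' Where you genuinely diverge is the uniqueness of the eigenvalue admitting a positive eigenfunction. The paper proves this by an elementary, self-contained duality computation: it takes the explicitly constructed $(\phi,\psi)$, tests it against any other positive eigenpair $(\lambda_1,\Phi,\Psi)$ via integration by parts, and after dividing the two resulting identities by $H'(0)$ and $G'(0)$ respectively, the cross terms $\int(\phi\Psi-\psi\Phi)$ cancel, leaving $(\lambda-\lambda_1)\bigl(\tfrac{1}{H'(0)}\int\phi\Phi+\tfrac{1}{G'(0)}\int\psi\Psi\bigr)=0$ and hence $\lambda=\lambda_1$ by positivity. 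You instead invoke the Krein--Rutman theory for irreducibly cooperative systems. That route is valid and more general (it would survive nonconstant coefficients, where the separation ansatz fails), but it imports nontrivial machinery: you must set the problem in a space such as $C_0^1$ where the positive cone has nonempty interior, and establish compactness and strong positivity of the (suitably shifted) resolvent, before the strong form of Krein--Rutman yields that no eigenvalue other than the principal one carries a positive eigenvector. The paper's argument costs two integrations by parts and nothing else, which is why it is the better choice here; if you want a fully self-contained write-up, you should either supply the functional-analytic setup for Krein--Rutman or replace that paragraph with the duality computation.
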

\begin{proof}Recall that $(\nu_1,\omega)$ is the unique eigenpair of \eqref{2.2}.
Set $\phi(x)=p\omega(x)$ and $\psi(x)=\omega(x)$.
Substituting such $(\phi,\psi)$ into \eqref{2.3} yields
 \bess\left\{\!\begin{array}{ll}
d_1\nu_1 p\omega+ap\omega-H'(0)\omega=\lambda p\omega, &x\in(0,l),\\[1mm]
 d_2\nu_1 \omega+b\omega-G'(0)p\omega=\lm \omega, &x\in(0,l),
 \end{array}\right.
 \eess
which is equivalent to the following algebraic eigenvalue problem
\bess\left\{\!\begin{array}{ll}
d_1\nu_1p+ap-H'(0)=\lambda p, \\
 d_2\nu_1+b-G'(0)p=\lm .
 \end{array}\right.
 \eess
Since $H'(0), G'(0)>0$, it is not hard to show that the above algebraic eigenvalue problem has two real eigenvalues $\lm_{\pm}$
 \bess
 \lm_{\pm}=\frac {d_1\nu_1+a+d_2\nu_1+b\pm\sqrt{\big(d_1\nu_1+a-d_2\nu_1-b\big)^2
 +4G'(0)H'(0)}}2.
 \eess
It is easy to show that only the eigenvector $(p, 1)^T$ corresponding to $\lm_-$ is positive, i.e., $p>0$. Moreover, direct computation shows $p=\frac{H'(0)}{d_1\nu_1+a-\lambda_-}$. So \eqref{2.3} has an eigenpair defined in \eqref{2.4}.

Now we prove that $(\lambda,\phi,\psi)$ defined in \eqref{2.4} is the unique eigenpair of $\eqref{2.3}$ with a positive eigenfunction. Assume on the contrary that  $(\lambda_1,\Phi,\Psi)$ is another eigenpair of \eqref{2.3} with a positive eigenfunction. By \eqref{2.3} and integrating by parts, we have
\bess
\lambda\int_{0}^{l}\phi\Phi\dx&=&\int_{0}^{l}\bigg(-d_1\phi_{xx}\Phi+a\phi\Phi-H'(0)\psi\Phi\bigg)\dx\\
&=&\int_{0}^{l}\bigg(-d_1\phi\Phi_{xx}+a\phi\Phi-H'(0)\psi\Phi\bigg)\dx\\
&=&\int_{0}^{l}\bigg(H'(0)\phi\Psi+\lambda_1\phi\Phi-H'(0)\psi\Phi\bigg)\dx,
\eess
which implies
\bes\label{2.5}
(\lambda-\lambda_1)\int_{0}^{l}\phi\Phi\dx=H'(0)\int_{0}^{l}(\phi\Psi-\psi\Phi)\dx.
\ees
Similarly, we can derive
\bess
(\lambda-\lambda_1)\int_{0}^{l}\psi\Psi\dx=G'(0)\int_{0}^{l}(\psi\Phi-\phi\Psi)\dx,
\eess
which, combined with \eqref{2.5}, leads to
 \bess
(\lambda-\lambda_1)\kk(\frac{\int_{0}^{l}\phi\Phi\dx}{H'(0)}
 +\frac{\int_{0}^{l}\psi\Psi\dx}{G'(0)}\rr)=0.
 \eess
Since both $\phi$, $\psi$, $\Phi$ and $\Psi$ are positive in $(0,l)$, we obtain $\lambda=\lambda_1$. The uniqueness follows. The proof is finished.
\end{proof}

From Lemma \ref{l2.2}, we immediately derive the following results.
\begin{enumerate}[$(1)$]
\item The principal eigenvalue $\lambda$ of \eqref{2.3} is strictly decreasing with respect to $l>0$.
\item $\lambda\to\frac{1}{2}\big(a+b-\sqrt{(a-b)^2+4G'(0)H'(0)}\big)$ as $l\to\yy$.
\item $\lambda\to\yy$ as $l\to0$.
\end{enumerate}
Moreover, it is easy to see that $a+b-\sqrt{(a-b)^2+4G'(0)H'(0)}<0$ is equivalent to $\mathcal{R}_0>1$. Rewrite $\lambda$ as $\lambda(l)$ to stress the dependence on $l$. Hence if $\mathcal{R}_0>1$, then there exists a unique $l_0>0$ such that $\lambda(l_0)=0$ and $\lambda(l)(l-l_0)<0$ if $l\neq l_0$.

Next we consider the following steady state problem
\bes\left\{\!\begin{array}{ll}\label{2.6}
-d_1u''=-au+H(v), \;\;&x\in(0,l),\\[1mm]
-d_2v''=-bv+G(u), &x\in(0,l),\\[1mm]
\mathbb{B}[u](0)=\mathbb{B}[v](0)=u(l)=v(l)=0.
 \end{array}\right.
\ees
\begin{lem}\label{l2.3}Let $\mathcal{R}_0>1$ and $\lambda(l)$ be the principal eigenvalue of \eqref{2.3}. Then the following statements are valid.
\begin{enumerate}[$(1)$]
\item Problem \eqref{2.6} has a unique positive solution $(u_l,v_l)$ if and only if $\lambda(l)<0$, i.e., $l>l_0$. Moreover, $0<u_l<u^*$, $0<v_l<v^*$ in $(0,l)$ and $(u_l,v_l)$ is globally asymptotically stable where $(u^*,v^*)$ is uniquely given by \eqref{1.2}.
\item Suppose $l>l_0$. Then $(u_l,v_l)$ is strictly increasing in $l>l_0$. Additionally,
\bes\left\{\!\begin{array}{ll}\label{2.7}
(u_l,v_l)\to(U,V) ~{\rm in ~ }C^2_{\rm loc}([0,\yy))  ~ {\rm as ~ }l\to\yy ~ ~ {\rm when~ } \mathbb{B}[w]=w,\\
(u_l,v_l)\to(u^*,v^*) ~{\rm in ~ }C^2_{\rm loc}([0,\yy))  ~ {\rm as ~ }l\to\yy ~ ~ {\rm when ~ }\mathbb{B}[w]=w',\\
 \end{array}\right.
\ees
where $(u^*,v^*)$ is uniquely given by \eqref{1.2} and $(U,V)$ is the unique bounded positive solution of
\bes\left\{\!\begin{array}{ll}\label{2.8}
-d_1u''=-au+H(v), \;\;&x\in(0,\yy),\\[1mm]
-d_2v''=-bv+G(u), &x\in(0,\yy),\\[1mm]
u(0)=v(0)=0.
 \end{array}\right.
\ees
\end{enumerate}
\end{lem}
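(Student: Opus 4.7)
The plan is to prove (1) first (existence, nonexistence, uniqueness, stability) and then (2) (monotonicity and limits). The two main tools are the upper-lower solution method for cooperative elliptic systems and the strict concavity of $H, G$, which yields the sublinearity $H(z) < H'(0) z$, $G(z) < G'(0) z$ for $z > 0$. I will repeatedly test against the principal eigenpair $(\lambda(l), \phi, \psi)$ from Lemma \ref{l2.2}.

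For the existence part of (1) with $\lambda(l) < 0$, I take $(u^*, v^*)$ from \eqref{1.2} as a constant upper solution and $\varepsilon(\phi, \psi)$ as a lower solution for $\varepsilon > 0$ small: the defect of the first equation of \eqref{2.6} at $(\varepsilon \phi, \varepsilon \psi)$ equals $\varepsilon\lambda\phi + \varepsilon H'(0)\psi - H(\varepsilon\psi) = \varepsilon\lambda\phi + o(\varepsilon)$, strictly negative for $\varepsilon$ small because $\lambda < 0$ and $\phi > 0$, and analogously for the second equation. Monotone iteration then produces a positive solution $(u_l, v_l)$ with $\varepsilon(\phi, \psi) \le (u_l, v_l) \le (u^*, v^*)$, and the strong maximum principle gives the strict inequalities $0 < u_l < u^*$, $0 < v_l < v^*$ in $(0, l)$.

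For the nonexistence part, suppose $(u, v)$ is a positive solution of \eqref{2.6} with $\lambda(l) \ge 0$. Multiplying the first equation by $\phi$, integrating by parts twice (the boundary contributions vanish at $x = 0$ in both Dirichlet and Neumann cases since $\mathbb{B}[u](0) = \mathbb{B}[\phi](0) = 0$, and at $x = l$ by the Dirichlet conditions), and substituting from the eigenvalue equation for $\phi$ yields
\bess
\lambda \int_0^l u\phi\,\dx + H'(0)\int_0^l u\psi\,\dx = \int_0^l H(v)\phi\,\dx < H'(0)\int_0^l v\phi\,\dx,
\eess
with the strict inequality from strict concavity of $H$. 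A parallel calculation on the second equation gives $\lambda\int_0^l v\psi\,\dx + G'(0)\int_0^l v\phi\,\dx < G'(0)\int_0^l u\psi\,\dx$. Multiplying the first by $G'(0)$, the second by $H'(0)$, and adding, the cross terms cancel and I obtain $\lambda\bigl(G'(0)\int_0^l u\phi\,\dx + H'(0)\int_0^l v\psi\,\dx\bigr) < 0$, contradicting $\lambda \ge 0$. Uniqueness of $(u_l, v_l)$ follows from a standard $\kappa$-argument: for two positive solutions $(u_i, v_i)$, set $\kappa = \sup\{k > 0 : u_1 \ge ku_2,\ v_1 \ge kv_2\}$; if $\kappa < 1$, the inequality $H(\kappa v_2) > \kappa H(v_2)$ (strict concavity with $H(0)=0$) makes $(u_1 - \kappa u_2, v_1 - \kappa v_2)$ a nonnegative supersolution of a cooperative linear system with strictly positive source, which the strong maximum principle and Hopf lemma use to strictly improve $\kappa$, a contradiction. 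Global asymptotic stability is then obtained by running the parabolic system from the same lower/upper solutions, together with the classical Sattinger-type monotone-iteration argument.

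For (2), monotonicity in $l$ follows from the observation that for $l_0 < l_1 < l_2$, the restriction $(u_{l_2}, v_{l_2})|_{[0, l_1]}$ is a strict upper solution of the $l_1$-problem (it has strictly positive boundary values at $x = l_1$), while the same $\varepsilon(\phi, \psi)$ remains a lower solution; uniqueness from (1) combined with monotone iteration forces $(u_{l_1}, v_{l_1}) \le (u_{l_2}, v_{l_2})|_{[0, l_1]}$, with strict inequality by Hopf. For the limit as $l \to \infty$, the family is monotone increasing in $l$ and uniformly bounded by $(u^*, v^*)$, so pointwise limits $(u_\infty, v_\infty)$ exist; local $W^{2,p}$ and Schauder estimates upgrade the convergence to $C^2_{\rm loc}([0, \infty))$, and the limit solves \eqref{2.8} with the corresponding boundary condition at $x = 0$. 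The main obstacle is identifying the limit uniquely: in the Dirichlet case the $\kappa$-trick fails due to noncompactness of $[0,\infty)$, and I plan to close it by first showing any bounded positive solution of \eqref{2.8} is monotone nondecreasing in $x$ with limit $(u^*, v^*)$ at infinity (phase-plane / energy analysis using concavity), then ruling out distinct profiles by a sliding-comparison argument on $x$-translates. In the Neumann case, reflecting across $x = 0$ yields a bounded positive entire solution on $\R$, and a Liouville-type argument based on strict concavity and the cooperative structure forces it to be the constant $(u^*, v^*)$.
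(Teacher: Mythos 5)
Your part (1) --- existence via the ordered pair $(\ep\phi,\ep\psi)\le(u^*,v^*)$, nonexistence for $\lambda(l)\ge 0$ via the integration-by-parts identity against the eigenfunctions, uniqueness by a multiplicative sweeping constant, and stability by monotone parabolic iteration --- is correct and essentially the paper's argument, as are the monotonicity in $l$ and the compactness step giving $C^2_{\rm loc}$ convergence to a solution of \eqref{2.8}. (One small point: in checking $\ep\lambda\phi+\ep H'(0)\psi-H(\ep\psi)<0$, the error $\ep H'(0)\psi-H(\ep\psi)$ is \emph{nonnegative} by concavity and must be beaten by $\ep\lambda\phi$ near the endpoints where $\phi$ vanishes; this works because the error vanishes quadratically while $\phi$ vanishes linearly, and the paper makes this precise via the Hopf lemma rather than a bare $o(\ep)$ estimate.)

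The genuine gap is in identifying the limit, which is the crux of (2). First, your claim that \emph{every} bounded positive solution of \eqref{2.8} is monotone, to be proved by ``phase-plane / energy analysis,'' is unsupported: the phase space is four-dimensional and the system has no first integral for general $H,G,d_1,d_2$ (it is not variational, since $H'(v)\ne G'(u)$). The paper does not prove this either; it proves monotonicity only for the minimal solution $(U,V)$, by invoking Troy's symmetry theorem on the finite intervals before passing to the limit, and for an arbitrary bounded positive solution $(U_1,V_1)$ it establishes only the weaker (and sufficient) fact $(U_1,V_1)\to(u^*,v^*)$ at infinity, via a case analysis on $\sup U_1$, $\sup V_1$ using concavity and the uniqueness of the positive root of \eqref{1.2}. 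Second, a sliding comparison on $x$-translates cannot separate two solutions here: both tend to $(u^*,v^*)$ from below, so no translate of one dominates the other without sharp exponential asymptotics (and translates do not respect the Dirichlet condition at $x=0$). The paper instead closes the noncompactness with the multiplicative sweeping $\hat k(U,V)\ge(U_1,V_1)$, $\hat k\ge1$: the key is that $\hat kU\to\hat ku^*>u^*\ge\limsup U_1$ controls the end at infinity, concavity gives the strict differential inequality in the interior, and Hopf handles $x=0$. Third, in the Neumann case the Liouville theorem you invoke for bounded positive entire solutions of the cooperative system on $\mathbb{R}$ is itself nontrivial (one must exclude non-constant bounded orbits such as standing connections) and is left unproved; the paper avoids it entirely by noting that the Neumann limit $(U_2,V_2)$ is nonincreasing and squeezed between the Dirichlet limit $(U,V)$ and $(u^*,v^*)$, hence tends to $(u^*,v^*)$ at infinity and must be identically $(u^*,v^*)$.
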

\begin{proof}(1) Assume that \eqref{2.6} has a positive solution $(u_l,v_l)$. Due to condition {\bf(H)}, we have
\bess\left\{\!\begin{array}{ll}
-d_1u''+au-H'(0)v<0, \;\;&x\in(0,l),\\[1mm]
-d_2v''+bv-G'(0)u<0, &x\in(0,l),\\[1mm]
\mathbb{B}[u](0)=\mathbb{B}[v](0)=u(l)=v(l)=0.
 \end{array}\right.
\eess
Then arguing as in the proof of uniqueness in Lemma \ref{l2.2}, we immediately obtain $\lambda(l)<0$.

Suppose $\lambda(l)<0$. Now we show the existence and uniqueness of positive solution of \eqref{2.6}.  We only handle the Dirichlet boundary condition $\mathbb{B}[w]=w$ since for the Neumann boundary condition $\mathbb{B}[w]=w'$, the desired results can be obtained by following the similar lines (actually, it is simpler).

 Let $(\phi,\psi)$ be the positive eigenfunction of $\lambda(l)$ with $\|\phi+\psi\|_{C([0,l])}=1$. Define $(\underline{u},\underline{v})=(\ep\phi,\ep\psi)$ and $(\bar{u},\bar{v})=(u^*,v^*)$ with $\ep$ to be determined later. Clearly, $(\bar{u},\bar{v})$ is an upper solution of \eqref{2.6}.

Simple calculations yield that for $x\in(0,l)$,
\bess
 -d_1\underline{u}''(x)+a\ud{u}-H(\ud{v})&=&\ep\big[-d_1\phi''(x)+a\phi-H(\ep\psi)/\ep\big]=\ep\big[\lm(l)\phi+H'(0)\psi-H(\ep\psi)/\ep\big].
 \eess
 Define
 \[F(x)=\lm(l)\phi(x)+H'(0)\psi(x)-H(\ep\psi(x))/\ep, ~ ~ x\in(0,l).\]
 We now show there exists a small $\ep_0>0$ such that $F(x)<0$ in $(0,l)$ if $\ep<\ep_0$. By the Hopf boundary Lemma, we have $\phi'(0)>0$, $\phi'(l)<0$, $\psi'(0)>0$ and $\psi'(l)<0$, which implies that $F'(0)=\lambda(l)\phi'(0)<0$ and $F'(l)=\lambda(l)\phi'(l)+H'(0)\psi'(l)-H'(\ep\psi(l))\psi'(l)<0$ if $\ep>0$ is small enough. So there is a small $\delta>0$ such that $F(x)<0$ in $(0,\delta)\cup(l-\delta,l)$. For $x\in[\delta,l-\delta]$,
 we have
 \bess
 F(x)\le\lambda(l)\min_{x\in[\delta,l-\delta]}\phi(x)+H'(0)-H(\ep)/\ep<0,
 \eess
 provided that $\ep$ is sufficiently small. Therefore, we derive that there exists a small $\ep_0>0$ such that if $\ep<\ep_0$, then for $x\in(0,l)$, $-d_1\underline{u}''(x)+a\ud{u}-H(\ud{v})<0$.

 Similarly, we can prove that there is a small $\ep_1>0$ such that if $\ep<\ep_1$, then for $x\in(0,l)$, $-d_2\underline{v}''(x)+b\ud{v}-G(\ud{u})<0$.
 Let $\ep\in(0,\min\{\ep_0,\ep_1\})$ satisfying $\ep\phi<u^*$ and $\ep\psi<v^*$ in $[0,l]$. It is thus clear that $(\underline{u},\underline{v})$ and $(\bar{u},\bar{v})$ are the ordered upper and lower solution of \eqref{2.6}. Then by the upper and lower solutions method for elliptic systems, we obtain that \eqref{2.6} has at least one positive solution $(u,v)$ with $(\underline{u},\underline{v})\le(u,v)\le(u^*,v^*)$.

 Now we show the uniqueness. Let $(u_1,v_1)$ and $(u_2,v_2)$ be the positive solution of \eqref{2.6}. By Hopf boundary lemma, we see that $u'_i(0)>0$, $u'_i(l)<0$, $v'_i(0)>0$ and $v'_i(l)<0$. Thus there exists $k\ge1$ such that $k(u_1,v_1)\ge(u_2,v_2)$  for $x\in[0,l]$. Define
 \[k^*=\inf\{k\ge1: k(u_1,v_1)\ge(u_2,v_2) {\rm ~ in ~ }[0,l]\}.\]
 Clearly, $k^*\ge1$ is well defined and $k^*(u_1,v_1)\ge (u_2, v_2)$ in $[0,l]$. We claim that $k^*=1$. Assume on the contrary that $k^*>1$. Direct computations show that
 \bess
 -d_1(k^*u_1-u_2)''+a(k^*u_1-u_2)
 =k^*H(v_1)-H(v_2)>H(k^*v_1)-H(v_2)\ge 0,\;\;x\in(0,l),
 \eess
and $k^*u_1(0)-u_2(0)=0$, $k^*u_1(l)-u_2(l)=0$. By the strong maximum principle and the Hopf lemma for elliptic equations, we have $k^*u_1-u_2>0$ in $(0,l)$,  $(k^*u_1-u_2)'(0)>0$ and $(k^*u_1-u_2)'(l)<0$. Therefore, we can find a $\ep_0>0$ satisfying $k^*/(1+\ep_0)>1$ such that $k^*u_1-u_2>\ep_0 u_2$, i.e., $\frac {k^*}{1+\ep_0}u_1\ge u_2$ in $[0,l]$. Similarly, we can show there exists a small $\ep_1>0$ with $k^*/(1+\ep_1)>1$ such that $\frac {k^*}{1+\ep_1}v_1\ge v_2$ in $[0,l]$. This clearly contradicts the definition of $k^*$. Thus $k^*=1$, i.e., $(u_1,v_1)\ge(u_2,v_2)$ in $[0,l]$. By exchanging the position of $(u_1,v_1)$ and $(u_2,v_2)$, we analogously derive $(u_2,v_2)\ge(u_1,v_1)$ in $[0,l]$. Therefore, the uniqueness follows.

By the strong maximum principle, we easily obtain that the unique positive solution $(u_l,v_l)$ of \eqref{2.6} satisfies $0<u_l<u^*$, $0<v_l<v^*$ in $(0,l)$.

Now we prove the stability. Let $(u_0,v_0)\in [L^\yy(0,l)]^2$, $u_0,v_0\ge 0$ and $u_0+v_0\not\equiv 0$. It is easy to see that the following initial boundary value problem
\bes\left\{\!\begin{array}{ll}\label{2b.4}
u_t=d_1u_{xx}-au+H(v), &t>0,~x\in(0,l),\\[1mm]
v_t=d_2v_{xx}-bv+G(u), &t>0,~x\in(0,l),\\[1mm]
u(t,0)=v(t,0)=u(t,l)=v(t,l)=0, &t>0, \; i=1,2,\\[1mm]
u(0,x)=u_{0}(x), ~ v(0,x)=v_0(x), &0\le x\le l
 \end{array}\right.
 \ees
has a unique global solution $(u(t,x), v(t,x))$. By the interior estimates for parabolic equations, we have $(u,v)\in [C^{1,2}((0,\yy)\times[0,l])]^2$. In view of the strong maximum principle and the Hopf boundary for parabolic equations, we see $u,v>0$ in $(0,\yy)\times(0,l)$ and $u_x(t,0)>0$, $v_x(t,0)>0$, $u_x(t,l)<0$ and $v_x(t,l)<0$ for all $t>0$. Thus $u(1,x),v(1,x)\in C^2([0,l])$, $u_x(1,0)>0$, $v_x(1,0)>0$, $u_x(1,l)<0$ and $v_x(1,l)<0$. Recall $(\phi,\psi)$ is the positive eigenfunction of $\lambda(l)$ and $(u_l,v_l)$ is the unique positive solution of \eqref{2.6}. As above, we can find $0<\ep\ll 1$ and $M\gg1$ such that $(u(1,x), v(1,x))\ge \ep(\phi,\psi)$ and $(u(1,x),v(1,x))\le M(u_l,v_l)$ in $(0,l)$. Consider the following two problems
 \bess\left\{
 \!\begin{array}{ll}
 \ud u_t=d_1\ud u_{xx}-a\ud u+H(\ud v), &t>0,~x\in(0,l),\\[1mm]
\ud v_t=d_2\ud v_{xx}-b\ud v+G(\ud u), &t>0,~x\in(0,l),\\[1mm]
\ud u(t,0)=\ud v(t,0)=\ud u(t,l)=\ud v(t,l)=0, &t>0,\\[1mm]
\ud u(0,x)=\ep\phi(x), ~ \ud v(0,x)=\ep\psi(x), &0\le x\le l.
 \end{array}
\right.
 \eess
and
\bess\left\{
 \!\begin{array}{ll}
 \bar u_t=d_1\bar u_{xx}-a\bar u+H(\bar v), &t>0,~x\in(0,l),\\[1mm]
\bar v_t=d_2\bar v_{xx}-b\bar v+G(\bar u), &t>0,~x\in(0,l),\\[1mm]
\bar u(t,0)=\bar v(t,0)=\bar u(t,l)=\bar v(t,l)=0, &t>0,\\[1mm]
\bar u(0,x)=Mu_l(x), ~ \bar v(0,x)=Mv_l(x), &0\le x\le l.
 \end{array}
\right.
 \eess
 As above, it is easy to show that there exist $0<\ep\ll1$ and $M\gg1$ such that
 \bess\left\{\!\begin{array}{ll}
-d_1\ud u''\le-a\ud u+H(\ud v), \;\;&x\in(0,l),\\[1mm]
-d_2\ud v''\le-b\ud v+G(\ud u), &x\in(0,l),\\[1mm]
\ud u(0)=\ud v(0)=\ud u(l)=\ud v(l)=0,
 \end{array}\right.
\eess
and
\bess\left\{\!\begin{array}{ll}
-d_1\ol u''\ge-a\ol u+H(\ol v), \;\;&x\in(0,l),\\[1mm]
-d_2\ol v''\ge-b\ol v+G(\ol u), &x\in(0,l),\\[1mm]
\ol u(0)=\ol v(0)=\ol u(l)=\ol v(l)=0.
 \end{array}\right.
\eess
Owing to the comparison principle for parabolic systems, we have that $(\ud u,\ud v)$ is nondecreasing and $(\ol u,\ol v)$ is nonincreasing in $t\ge0$, as well as
 \[(\ud u(t,x),\ud v(t,x))\le (u(t,x),v(t,x))\le(\ol u(t,x),\ol v(t,x)),\;\;\;x\in[0,l],\;\;t\ge 0.\]

Then using the arguments of regularity, uniform estimates and compactness for parabolic equations (cf. \cite[Theorem 3.14]{WangPara}), one easily shows that both $(\ud u(t,x),\ud v(t,x))$ and $(\ol u(t,x),\ol v(t,x))$ converge to $(u_l(x),v_l(x))$ in $C^2([0,l])$ as $t\to\yy$. Hence $(u(t,x),v(t,x))\to(u_l(x),v_l(x))$ in $C^2([0,l])$ as $t\to\yy$. The conclusion (1) is obtained.

(2) The proof will be divided into two steps.

{\bf Step 1.} This step is devoted to the case with Dirichlet boundary condition $\mathbb{B}[w]=w$.  By arguing as in the proof of the uniqueness of (1), one directly obtains that $(u_l,v_l)$ is nondecreasing in $l>l_0$. Then from the strong maximum principle, it follows that $(u_l,v_l)$ is strictly increasing in $l>l_0$, i.e., $u_{l_1}(x)>u_{l_2}(x)$ and $v_{l_1}(x)>v_{l_2}(x)$ in $(0,l_2)$ for any $l_1>l_2>l_0$. Notice that $0<u_l<u^*$, $0<v_l<v^*$ in $(0,l)$. Thus $U(x):=\lim_{l\to\yy}u_l(x)$ and $V(x):=\lim_{l\to\yy}v_l(x)$ are well defined for all $x\ge0$. Moreover, $0<U\le u^*$ and $0<V\le v^*$ in $[0,\yy)$. By the regularity theory for elliptic equations, we know that $(u_l(x),v_l(x))\to(U(x),V(x))$ in $[C^2_{\rm loc}([0,\yy))]^2$, and thus $(U,V)$ satisfies \eqref{2.8}.

Then we prove the uniqueness.

{\it Claim 1.}  $(U,V)$ is strictly increasing in $x\ge0$ and $(U(x),V(x))\to(u^*,v^*)$ as $x\to\yy$.

By \cite[Theorem 1]{Troy}, $(u_l,v_l)$ is radially symmetric about $x=l/2$ and $u'(x)>0$, $v'(x)>0$ for $0<x<l/2$. Thus $(U,V)$ is nondecreasing in $x\ge0$. Let $(\dot{U},\dot{V})=(U',V')$. Then we have
\bes\left\{\!\begin{array}{ll}\label{2.11}
-d_1\dot U''=-a\dot{U}+H'(V)\dot{V}, \;\;&x\in(0,\yy),\\[1mm]
-d_2\dot V''=-b\dot{V}+G'(U)\dot{U}, &x\in(0,\yy),\\[1mm]
\dot U(0)>0, ~ \dot V(0)>0, ~ \dot{U}\ge0, ~ \dot{V}\ge0.
 \end{array}\right.
\ees
By the strong maximum principle for elliptic equations, we have $\dot{U}>0$ and $\dot{V}>0$, i.e., $U'>0$ and $V'>0$ in $[0,\yy)$. Then $U(\yy):=\lim_{x\to\yy}U(x)>0$ and $V(\yy):=\lim_{x\to\yy}V(x)>0$ are well defined. Notice that $(U',V')$ is uniformly continuous in $[0,\yy)$. By Barbalat's lemma or \cite[Lemma 2.3]{WZou}, we see that $(U'(x),V'(x))\to(0,0)$ as $x\to\yy$. By \eqref{2.11}, $(U'',V'')$ is also uniformly continuous in $[0,\yy)$. Using Barbalat's lemma again, we obtain $(U''(x),V''(x))\to(0,0)$ as $x\to\yy$. Letting $x\to\yy$ in \eqref{2.6} yields that $H(V(\yy))=aU(\yy)$ and $G(U(\yy))=bV(\yy)$, which, together with {\bf(H)}, implies that $U(\yy)=u^*$ and $V(\yy)=v^*$. Our claim is verified.

Let $(U_1,V_1)$ be an arbitrary bounded positive solution of \eqref{2.8}.

{\it Claim 2.} $(U_1(x),V_1(x))\to(u^*,v^*)$ as $x\to\yy$.

Using the similar arguments as in (1), we can show that $(U_1,V_1)\ge(u_l,v_l)$ in $[0,l]$ for all $l>l_0$. So $(U_1,V_1)\ge(U,V)$ in $[0,\yy)$. Note that $(U(x),V(x))\to(u^*,v^*)$ as $x\to\yy$. We have $\liminf_{x\to\yy}(U_1(x),V_1(x))\ge(u^*,v^*)$. It remains to show $U_{\rm sup}:=\sup_{x\in[0,\yy)}U_1\le u^*$ and $V_{\rm sup}:=\sup_{x\in[0,\yy)}V_1(x)\le v^*$. By way of contradiction, we may assume that $\sup_{x\in[0,\yy)}U_1(x)>u^*$. There are two cases to be considered.

{\it Case 1.} There exists some $x_0>0$ such that $U_1(x_0)=\sup_{x\in[0,\yy)}U_1(x)>u^*$.

{\it Case 2.} $U_1(x)<U_{\rm sup}$ for all $x\ge0$, and $\limsup_{x\to\yy}U_1(x)=U_{\rm sup}>u^*$. This case can be divided into two subcases, {\it subcase 1}: there exists a sequence of maximum pints $\{x_n\}\to\yy$ such that $U_1(x_n)\to U_{\rm sup}$, and {\it subcase 2}: there exists some $X_0>0$ such that $U_1(x)$ increases to $U_{\rm sup}$ for $x\ge X_0$.

Next we show both these two cases can derive some contradictions. For case 1, since $x_0$ is a maximum pint of $U_1$, we have $U''_1(x_0)\le0$. Thus $H(V_1(x_0))\ge aU_1(x_0)$. By {\bf (H)}, it is easy to see that $V_1(x_0)>v^*$. Thus $V_{\rm sup}>v^*$. If there is a $\hat x>0$ such that $V_1(\hat x)=V_{\rm sup}$, then $V''_1(\hat x)\le0$. So $G(U_1(\hat x))\ge bV_1(\hat x)$. In summary, we obtain
 \[H(V_1(x_0))\ge aU_1(x_0), ~ G(U_1(\hat x))\ge bV_1(\hat x), ~ U_1(x_0)\ge U_1(\hat{x}), ~ V_1(x_0)\le V_1(\hat{x}),\]
 by which we can deduce that $G(H(V_1(x_0))/a)\ge bV_1(x_0)$. In view of {\bf (H)}, there exists another positive root of \eqref{1.2}. This is a contradiction. If $V_1(x)<V_{\rm sup}$ for all $x\ge0$, then either there exists a sequence of maximum pints $\{\tilde x_n\}\to\yy$ such that $V_1(\tilde x_n)\to V_{\rm sup}$, or there exists some $\tilde X_0>0$ such that $V_1(x)$ increases to $V_{\rm sup}$ for $x\ge \tilde X_0$. For the former, we have $V''_1(\tilde{x}_n)\le0$. So $G(U_1(\tilde{x}_n))\ge bV_1(\tilde{x}_n)$. By passing a subsequence if necessary, we see $G(U^{\yy}_1)\ge bV_{\rm sup}$ for some $U^{\yy}_1>u^*$. In a word, we obtain
 \[H(V_1(x_0))\ge aU_1(x_0), ~ G(U^{\yy}_1)\ge bV_{\rm sup}, ~ U_1(x_0)\ge U^{\yy}_1, ~ V_1(x_0)\le V_{\rm sup},\]
 which also indicates $G(H(V_1(x_0))/a)\ge bV_1(x_0)$. Thus a similar contradiction can be derived.  For the latter, noticing that $V_1$ converges and $V''_1$ is bounded, we see $V'\to0$ as $x\to\yy$. Moreover, since $U''_1$ and $U_1$ are bounded, so does $U'_1$. Differentiating the equation of $V_1$ yields that $V'''_1$ is bounded too, which implies that $V''_1$ is uniformly continuous in $x\ge0$. This together with $V'_1\to0$ as $x\to\yy$ leads to $V''_1\to0$ as $x\to\yy$. Thus by passing a subsequence if necessary, we have $G(\tilde{U}^{\yy}_1)=bV_{\rm sup}$ for some $\tilde{U}^{\yy}_1>u^*$. Analogously, we can derive that there exists another positive root of \eqref{1.2}, which is a contradiction. Therefore, Case 1 always can produce some contradictions.

Now we handle Case 2. If there exists a sequence of maximum pints $\{x_n\}\to\yy$ such that $U_1(x_n)\to U_{\rm sup}>u^*$, then one easily obtains $V_{\rm sup}>v^*$. Moreover, by passing a subsequence if necessary, we have $H(V^{\yy}_1)\ge aU_{\rm sup}$ for some $V^{\yy}_1>v^*$. If $V_1$ achieves its supremum somewhere on $[0,\yy)$, then using the similar arguments as above, we can derive a contradiction. Then we deal with the case $V_1(x)<V_{\rm sup}$ for all $x\ge0$, and clearly $\limsup_{x\to\yy}V_1(x)=V_{\rm sup}>v^*$. Analogously, we can derive that there exists some $\hat U^{\yy}_1>u^*$ such that $G(\hat U^{\yy})\ge bV_{\rm sup}$, which combined with $H(V^{\yy}_1)\ge aU_{\rm sup}$, yields a contradiction.

 If there exists some $X_0>0$ such that $U_1(x)$ increases to $U_{\rm sup}$ for $x\ge X_0$, by following the above lines we also can obtain some contradictions. To sum up, our claim is proved.

From the above arguments, we can define
\[\hat k=\inf\{k\ge1: k(U,V)\ge(U_1,V_1) {\rm ~ in  ~ }[0,\yy)\}.\]
Clearly, $\hat k\ge1$ and $\hat k(U,V)\ge(U_1,V_1)$ in $[0,\yy)$. Assume $\hat{k}>1$. Denote $\hat{k}U-U_1$ by $\mathcal{U}$. Then
\bess
-d_1\mathcal{U}''+a\mathcal{U}=\hat kG(V)-G(V_1)>G(\hat{k}V)-G(V_1)\ge0, ~ ~ x\in(0,\yy).
\eess
Since $\mathcal{U}(0)=0$ and $\mathcal{U}\ge0$, from the strong maximum principle and Hopf lemma for elliptic equations, we have $\mathcal{U}'(0)>0$ and $\mathcal{U}>0$ in $(0,\yy)$. Thus there exists a small $\ep_0>0$ such that $\hat{k}>1+\ep_0$ and $\frac{\hat{k}}{1+\ep_0}U\ge U_1$ in $[0,1]$. Notice that $\mathcal{U}(x)\to(\hat{k}-1)u^*>0$. One easily finds some $\ep_1>0$ such that $\hat{k}>1+\ep_1$ and $\frac{\hat{k}}{1+\ep_1}U\ge U_1$ in $[1,\yy)$. Thus $\hat{k}>1+\min\{\ep_0,\ep_1\}$ and $\frac{\hat{k}}{1+\min\{\ep_0,\ep_1\}}U\ge U_1$ in $[0,\yy)$. Similarly, we can derive that there exists some $\ep_2>0$ such that $\hat{k}>1+\ep_2$ and $\frac{\hat{k}}{1+\ep_2}V\ge V_1$ in $[0,\yy)$. This contradicts the definition of $\hat{k}$. So $\hat{k}=1$ and $(U,V)\ge(U_1,V_1)$ in $[0,\yy)$. Therefore, the uniqueness is obtained.

{\bf Step 2.} We prove the conclusion (2) for  the Neumann boundary condition $\mathbb{B}[w]=w'$. Let $(u_l,v_l)$ be the unique positive solution of \eqref{2.6} with $\mathbb{B}[u]=u'$ and $\mathbb{B}[v]=v'$. Obviously, $(u_l,v_l)$ is decreasing with respect to $x\in[0,l]$. By (1), we can define $U_2:=\lim_{l\to\yy}u_l(x)$ and $V_2:=\lim_{l\to\yy}v_l(x)$ for $x\ge0$. Clearly, $0<U_2\le u^*$ and $0<V_2\le v^*$ in $[0,\yy)$ and $(U_2,V_2)$ is nonincreasing in $x\ge0$. By the regularity of elliptic equations, $(u_l,v_l)\to(U_2,V_2)$ in $[C^2_{\rm loc}([0,\yy))]^2$ and $(U_2,V_2)$ satisfies \eqref{2.8} with $u(0)=v(0)=0$ replaced by $u'(0)=v'(0)=0$. Let $(\tilde{u}_l,\tilde{v}_l)$ be the unique positive solution of \eqref{2.6} with $\mathbb{B}[u]=u$ and $\mathbb{B}[v]=v$. For all large $l>0$, as above we see $(u_l,v_l)\ge(\tilde{u}_l,\tilde{v}_l)$ for $x\in[0,l]$. Thus $(U_2,V_2)\ge(U,V)$ where $(U,V)$ is the unique bounded positive solution of \eqref{2.8} with $\mathbb{B}[u]=u$ and $\mathbb{B}[v]=v$, which together with $0<U_2\le u^*$ and $0<V_2\le v^*$ in $[0,\yy)$, yields that $(U_2(x),V_2(x))\to(u^*,v^*)$ as $x\to\yy$. Note that $(U_2,V_2)$ is nonincreasing in $x\ge0$. We immediately obtain $(U_2,V_2)=(u^*,v^*)$. The proof is finished.
\end{proof}

\begin{remark}\label{r2.1}As is seen from the above proof, \eqref{2.8} has a unique bounded positive solution $(U,V)$ which is strictly increasing in $[0,\yy)$ and connects $(0,0)$ and $(u^*,v^*)$. In a forthcoming paper, it will be proved that $(U,V)$ is the limiting profile of the solution of a semi-wave problem, and $(u^*-U(x),v^*-V(x))=e^{\alpha x}(p+o(1),q+o(1))$ for some $p,q>0$ and $\alpha<0$.
\end{remark}

Thanks to {\bf(H)} and $\mathcal{R}_0>1$, we have $G(H(x)/a)-bx<0$ for all $x>v^*$, which implies that we can choose sufficiently large $K_2>0$ such that
 \bes\label{2.11a}
 K_2>\max\{v^*,\|v_0\|_{C([0,h_0])}\}, ~ K_1:=\frac{H(K_2)}{a}> \max\{u^*,\|u_0\|_{C([0,h_0])}\}\; {\rm  ~ and ~ }\; G(K_1)< bK_2.
 \ees
In order to study the longtime behaviors of solution component $(u,v)$ of \eqref{1.6} when operator $\mathbb{B}[w]=w$, we need to consider the problem
\bes\left\{\!\begin{array}{ll}\label{2.12}
-d_1u''=-au+H(v), \;\;&x\in(0,l),\\[1mm]
-d_2v''=-bv+G(u), &x\in(0,l),\\[1mm]
u(0)=v(0)=0, ~ u(l)=K_1, ~ v(l)=K_2,
 \end{array}\right.
\ees
where $(K_1,K_2)$ satisfies \eqref{2.11a}.

We would like to mention that the well-known moving plane method, which has a long history and been attracting many people up to now, will be used later to show the monotonicity.
This clever method usually can be used to prove the symmetry, monotonicity and a prior estimate for elliptic equations and nonlocal equations. Please see, for example, \cite{Duorder,ChenLi} and references therein for single equation, as well as \cite{Troy,Dja} for cooperative elliptic systems on bounded domain.

\begin{lem}\label{l2.4}Assume that $\mathcal{R}_0>1$. Then the following statements hold.
\begin{enumerate}[$(1)$]
\item Problem \eqref{2.12} has a unique positive solution $(u_l,v_l)$ if $\lambda(l)<0$, i.e., $l>l_0$, where $\lambda(l)$ is the principal eigenvalue of \eqref{2.3} with operator $\mathbb{B}[w]=w$.  Moreover, $0<u_l<K_1$ and $0<v_l<K_2$, $u'_l>0$ and $v'_l>0$ in $(0,l)$, and $(u_l,v_l)$ is globally asymptotically stable.
\item The unique positive solution $(u_l,v_l)$ is strictly decreasing in $l>l_0$. Besides, $(u_l,v_l)\to(U,V)$ in $[C^2_{\rm loc}([0,\yy))]^2$ as $l\to\yy$ where $(U,V)$ is the unique bounded positive solution of \eqref{2.8}.
    \end{enumerate}
\end{lem}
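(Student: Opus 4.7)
The argument runs in close parallel to the proof of Lemma~\ref{l2.3}, with two new ingredients. First, the nonhomogeneous Dirichlet data at $x=l$ forces us to use $(K_1,K_2)$ rather than $(u^*,v^*)$ as the natural upper solution; the choice \eqref{2.11a} is tailored precisely for this. Second, the monotonicity of $(u_l,v_l)$ in $x$---not needed in Lemma~\ref{l2.3}---must be established through a sliding/moving-plane argument that exploits the cooperative structure of the system and the concavity of $H,G$.

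\textbf{Plan for (1).} For existence when $\lambda(l)<0$, take $(\bar u,\bar v)=(K_1,K_2)$: the interior requirements $0\ge-aK_1+H(K_2)=0$ and $0\ge-bK_2+G(K_1)$ hold by \eqref{2.11a}, and the boundary conditions are immediate. For a lower solution let $(\phi,\psi)$ be the positive Dirichlet eigenpair corresponding to $\lambda(l)<0$, and set $(\ud u,\ud v)=(\ep\phi,\ep\psi)$ with $\ep$ small; the interior inequalities are checked by the same $F(x)$-analysis as in the proof of Lemma~\ref{l2.3}(1), while $\ud u(l)=\ud v(l)=0$ trivially lies below $(K_1,K_2)$. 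The upper--lower solutions method for cooperative elliptic systems then produces a solution sandwiched between these bounds, and the strong maximum principle sharpens to $0<u_l<K_1$, $0<v_l<K_2$ on $(0,l)$. Uniqueness follows from the $k^*$-sweeping of Lemma~\ref{l2.3}(1): if $(u_1,v_1),(u_2,v_2)$ are positive solutions and $k^*=\inf\{k\ge 1:k(u_1,v_1)\ge(u_2,v_2)\text{ on }[0,l]\}>1$, the concavity bound $k^*H(v_1)\ge H(k^*v_1)\ge H(v_2)$ (and the analog for $G$) shows that $(W,Z):=(k^*u_1-u_2,k^*v_1-v_2)$ is a nonnegative, not identically zero solution of a cooperative linear inequality with $W(0)=Z(0)=0$ and $W(l)=Z(l)/K_2\cdot K_1=(k^*-1)K_1>0$; Hopf at $x=0$ then gives $W'(0),Z'(0)>0$, contradicting the minimality of $k^*$. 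Global asymptotic stability is obtained as in Lemma~\ref{l2.3}(1): sandwich the parabolic solution between monotone trajectories starting from $\ep(\phi,\psi)$ (a stationary sub-solution with $0\le K_1$ at $x=l$) and $M(u_l,v_l)$ (a stationary super-solution for $M\gg 1$, using $H(Mv_l)\le MH(v_l)$), and pass to the $C^2$ limit via interior parabolic regularity.

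\textbf{Main obstacle: spatial monotonicity.} To prove $u_l'>0$ and $v_l'>0$ on $(0,l)$, I would use the sliding method. For $\lambda\in(0,l)$ set $u^\lambda(x):=u_l(x+\lambda)$, $v^\lambda(x):=v_l(x+\lambda)$ on $[0,l-\lambda]$; the differences $w=u^\lambda-u_l$, $z=v^\lambda-v_l$ satisfy the linear cooperative system
\begin{align*}
-d_1 w''+aw=H'(\xi)z,\qquad -d_2 z''+bz=G'(\eta)w
\end{align*}
on $(0,l-\lambda)$ with $w(0)=u_l(\lambda)>0$, $z(0)=v_l(\lambda)>0$, $w(l-\lambda)=K_1-u_l(l-\lambda)>0$, $z(l-\lambda)=K_2-v_l(l-\lambda)>0$. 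For $\lambda$ close to $l$ the interval $[0,l-\lambda]$ is short and the boundary values of $u^\lambda,v^\lambda$ are close to $K_1,K_2$, while $u_l(x),v_l(x)$ are close to $0,0$ there, so uniform continuity forces $w,z>0$ on the whole interval. Let $\Lambda$ be the infimum of those $\lambda_0\in(0,l)$ for which $u^\lambda\ge u_l$ and $v^\lambda\ge v_l$ on $[0,l-\lambda]$ for every $\lambda\in(\lambda_0,l)$. If $\Lambda>0$, then by continuity the inequalities hold at $\lambda=\Lambda$, so the strong maximum principle and Hopf lemma for the cooperative system yield strict interior positivity and strict endpoint derivative inequalities, which by a small perturbation contradicts the minimality of $\Lambda$. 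Hence $\Lambda=0$, giving $(u^\lambda,v^\lambda)\ge(u_l,v_l)$ for every $\lambda\in(0,l)$, which is exactly spatial monotonicity.

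\textbf{Plan for (2).} Fix $l_1>l_2>l_0$. By the spatial monotonicity just established, $u_{l_1}(l_2)<u_{l_1}(l_1)=K_1=u_{l_2}(l_2)$ (and analogously for $v$), so on $[0,l_2]$ the pair $(u_{l_1},v_{l_1})$ has strictly smaller data at the right endpoint than $(u_{l_2},v_{l_2})$. Running the $k^*$-sweeping of Part (1) with $k(u_{l_2},v_{l_2})$ against $(u_{l_1},v_{l_1})$ on $[0,l_2]$---$k^*$ is finite because $u_{l_2}'(0),v_{l_2}'(0)>0$ by Hopf---produces $k^*=1$, and the strong maximum principle upgrades the inequality to strictness, hence $(u_l,v_l)$ is strictly decreasing in $l$. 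For each fixed $x\ge 0$ the sequence $(u_l(x),v_l(x))$ is monotone decreasing and bounded below by $0$, so pointwise limits $(U_\yy(x),V_\yy(x))$ exist; interior elliptic Schauder estimates upgrade this to $[C^2_{\rm loc}([0,\yy))]^2$ convergence, and $(U_\yy,V_\yy)$ satisfies \eqref{2.8}. To identify $(U_\yy,V_\yy)$ with the unique bounded positive solution $(U,V)$ of Lemma~\ref{l2.3}(2), let $(\tilde u_l,\tilde v_l)$ be the positive solution of \eqref{2.6} with zero data at both ends; a third $k^*$-sweeping on $[0,l]$ (using $u_l(l)=K_1>0=\tilde u_l(l)$) gives $(u_l,v_l)\ge(\tilde u_l,\tilde v_l)$, so passing $l\to\yy$ via Lemma~\ref{l2.3}(2) yields $(U_\yy,V_\yy)\ge(U,V)>0$. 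Thus $(U_\yy,V_\yy)$ is a bounded positive solution of \eqref{2.8}, and uniqueness in Lemma~\ref{l2.3}(2) forces $(U_\yy,V_\yy)=(U,V)$.
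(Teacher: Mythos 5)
Your proposal is correct and, for the existence, uniqueness, stability and for part (2), follows essentially the same route as the paper: $(K_1,K_2)$ as upper solution, a small multiple of the eigenfunction (the paper instead uses $(\tilde u_l,\tilde v_l)$ from Lemma \ref{l2.3}, but either works) as lower solution, the $k^*$-sweeping for uniqueness and for monotonicity in $l$, and identification of the limit with $(U,V)$ via $(u_l,v_l)\ge(\tilde u_l,\tilde v_l)$ and the uniqueness in Lemma \ref{l2.3}(2). The genuine divergence is the spatial monotonicity. The paper uses the moving plane (reflection) method in two passes --- reflecting about $x=\lambda\in(0,l/2)$ to get $u'\ge 0$ on $[0,l/2]$, then repeating from the right endpoint with $(u(l-x),v(l-x))$ to cover $[l/2,l]$ --- with the narrow-domain maximum principle of \cite{Dja} to start and to continue the planes. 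You use the sliding method $u^\lambda(x)=u_l(x+\lambda)$, which exploits that the system is autonomous and that the boundary data $0<u_l<K_1$ make both endpoint differences strictly positive; this covers the whole interval in a single sweep and is, if anything, the more natural tool for a one-directional monotonicity statement. Two small points you gloss over: the continuation step at $\lambda=\Lambda$ ("by a small perturbation") still needs the cooperative narrow-domain maximum principle on the newly exposed strip near $x=l-\lambda$, exactly the ingredient \cite{Dja} supplies in the paper's version; and sliding only yields $u_l'\ge 0$, $v_l'\ge 0$, so the strict inequalities $u_l'>0$, $v_l'>0$ in $(0,l)$ require the final step (which the paper carries out) of differentiating the system and applying the Hopf lemma at $x=0$ together with the strong maximum principle to the cooperative system satisfied by $(u_l',v_l')$. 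Both are routine, so I regard your argument as complete in substance.
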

\begin{proof}(1) Since $\lambda(l)<0$, owing to Lemma \ref{l2.3}, problem \eqref{2.6} with $\mathbb{B}[u]=\mathbb{B}[v]=0$ has a unique positive solution $(\tilde{u}_l,\tilde{v}_l)$. Thus it is easy to see that $(\tilde{u}_l,\tilde{v}_l)$ and $(K_1,K_2)$ are the ordered upper and lower solutions for \eqref{2.12}. By the upper and lower solutions method for elliptic systems, \eqref{2.12} has at least one positive solution $(u_l,v_l)$ with $(\tilde{u}_l,\tilde{v}_l)\le(u_l,v_l)\le(K_1,K_2)$ in $[0,l]$. The uniqueness can be derived by utilizing the similar arguments as in Lemma \ref{l2.3}. By the strong maximum principle for elliptic equations, we easily show $0<u_l<K_1$ and $0<v_l<K_2$ in $(0,l)$. The stability also can be proved by using the methods as in Lemma \ref{l2.3}.

Thus to complete the proof of (1), it remains to show $u'_l>0$ and $v'_l>0$ in $(0,l)$. It will be done by using the well-known moving plane method. For convenience, we drop $l$ in $(u_l,v_l)$, and rewrite $(u_l,v_l)$ as $(u,v)$. Choose $\lambda\in(0,l/2)$ and denote $(u(2\lambda-x),v(2\lambda-x))$ by $(u^{\lambda}(x),v^{\lambda}(x))$. Clearly, $(u^{\lambda},v^{\lambda})$ satisfies the first two equations of \eqref{2.12} in $(0,\lambda)$. Define
\[w(x,\lambda)=u^{\lambda}(x)-u(x), ~ ~ z(x,\lambda)=v^{\lambda}(x)-v(x).\]
Then we have
\bes\left\{\!\begin{array}{ll}\label{2.13}
-d_1w''=-aw+\xi z, \;\;&x\in(0,\lambda),\\[1mm]
-d_2z''=-bz+\eta w, &x\in(0,\lambda),\\[1mm]
w(0)>0, ~ z(0)>0, ~ w(\lambda)=z(\lambda)=0,
 \end{array}\right.
\ees
where $(\xi,\eta)\ge(0,0)$ and $\xi,\eta\in L^{\yy}((0,\lambda))$. By \cite[Proposition 1.1]{Dja}, there exists a $0<\delta\ll1$ depending only on $d_1,d_2$ and $l$ such that $w\ge0$ and $z\ge0$ in $(0,\lambda)$ if $\lambda<\delta$. Then by the strong maximum principle for elliptic equations, we have $w>0$ and $z>0$ in $(0,\lambda)$ with  $\lambda\in(0,\delta)$. So we can define
\bess
\varrho=\sup\{\lambda\in(0,l/2):w>0,~ z>0 {\rm ~ in ~ }(0,\lambda)\}.
\eess
Thanks to the strong maximum principle, one sees $w(x,\varrho)>0$ and $z(x,\varrho)>0$ in $(0,\varrho)$. We next show $\varrho=l/2$. Argue by the contradiction that $\varrho<l/2$. Clearly, $w(x,\varrho)>0$ and $z(x,\varrho)>0$ on $[\delta/8,\varrho-\delta/8]$. Due to the continuity, there exists a small $\ep_0\in(0,3\delta/4)$ such that \[w(x,\varrho+\ep)>0, ~ ~z(x,\varrho+\ep)>0 ~ {\rm on ~ }[\delta/8,\varrho-\delta/8], ~ ~ \forall\ep\in[0,\ep_0].\]
Recall that $(w(x,\varrho+\ep),z(x,\varrho+\ep))$ satisfies the first two equations of \eqref{2.13}. Moreover,
\[w(x,\varrho+\ep)>0, ~ z(x,\varrho+\ep)>0 ~ {\rm at} ~ \{0,\delta/8,\varrho-\delta/8\} {\rm  ~ and  ~ }w(\varrho+\ep,\varrho+\ep)=z(\varrho+\ep,\varrho+\ep)=0.\]
 Thus owing to \cite[Proposition 1.1]{Dja} and the strong maximum principle, we know $w(x,\varrho+\ep)>0$ and $z(x,\varrho+\ep)>0$ on $[0,\delta/8]$ and $[\varrho-\delta/8,\varrho+\ep)$. Therefore, we derive that
 \[w(x,\varrho+\ep)>0, ~ z(x,\varrho+\ep)>0 ~ {\rm in ~ }(0,\varrho+\ep)~ {\rm  for ~ } \ep\in(0,\ep_0]\]
  which contradicts the definition of $\varrho$. So $\varrho=l/2$. Then for any $x_0\in(0,l/2)$, we immediately have
  \[u(x_0)<u(x), ~ v(x_0)<v(x) ~ ~\forall x_0<x<l-x_0,\]
  which implies that $u'(x)\ge0$ and $v'(x)\ge0$ on $[0,l/2]$. Using the Hopf lemma, we have $u'(0)>0$ and $v'(0)>0$. Then differentiating the equations of $u$ and $v$ yields
\bess\left\{\!\begin{array}{ll}
-d_1u'''=-au'+H'(v)v', \;\;&x\in(0,l/2),\\[1mm]
-d_2v'''=-bv'+G'(u)u', &x\in(0,l/2),\\[1mm]
u'(0)>0, ~ v'(0)>0, ~ u'(l/2)\ge0, ~ v'(l/2)\ge0.
 \end{array}\right.
\eess
Utilizing the strong maximum principle for the equations of $u'$ and $v'$, respectively, we get $u'>0$ and $v'>0$ in $(0,l/2)$.

Now we show $u'>0$ and $v'>0$ in $(l/2,l)$. Denote $(\tilde{u}(x),\tilde{v}(x))$ by $(u(l-x),v(l-x))$. Obviously, $(\tilde{u},\tilde{v})$ still satisfies the first two equations of \eqref{2.12}, but $(\tilde{u}(0),\tilde{v}(0))=(K_1,K_2)$ and $(\tilde{u}(l),\tilde{v}(l))=(0,0)$. Denote
\[\tilde{w}(x,\lambda)=\tilde{u}(x)-\tilde{u}(2\lambda-x),~ ~ \tilde{z}(x,\lambda)=\tilde{v}(x)-\tilde{v}(2\lambda-x).\]
 It is easy to see that $(\tilde{w}(x,\lambda),\tilde{z}(x,\lambda))$ satisfies \eqref{2.13}. Then we can argue as above to derive that $\tilde{u}'<0$ and $\tilde{v}'<0$ in $(0,l/2)$ which implies that $u'>0$ and $v'>0$ in $(l/2,l)$. Since $u'(l/2)>0$ and $v'(l/2)>0$ are obvious, we complete the proof of (1).

(2) Notice that $0<u_l<K_1$ and $0<v_l<K_2$ in $(0,l)$. Similar to the proof of Lemma \ref{l2.3}, it is not hard to prove that $(u_l,v_l)$ is strictly decreasing in $l>l_0$. Thus $\tilde U(x):=\lim_{l\to\yy}u_l$ and $\tilde{V}(x):=\lim_{l\to\yy}v_l(x)$ are well defined, nondecreasing in $[0,\yy)$ and $(\tilde{U},\tilde{V})\ge(U,V)$ in $[0,\yy)$. By the regularity for elliptic equations, we know $(u_l,v_l)\to(\tilde{U},\tilde{V})$ in $[C^2_{\rm loc}([0,\yy))]^2$ as $l\to\yy$, which implies $(\tilde{U},\tilde{V})$ solves \eqref{2.8}. By the uniqueness of bounded positive solution of \eqref{2.8}, we have $(\tilde{U},\tilde{V})=(U,V)$. The proof is complete.
\end{proof}

\begin{remark}\label{r2.2}From the above lemma, it follows that \eqref{2.12} has a unique positive solution if $\lambda(l)<0$, i.e., $l>l_0$, where $\lambda(l)$ is the principal eigenvalue of \eqref{2.3} with operator $\mathbb{B}[w]=w$.

Indeed, we can prove that if $\lambda(l)<0$, where $\lambda(l)$ is the principal eigenvalue of \eqref{2.3} with operator $\mathbb{B}[w]=w'$, then \eqref{2.12} also has a unique positive solution. Let $(u_l,v_l)$ be the positive solution of \eqref{2.6} with $\mathbb{B}[w]=w'$. One can easily verify that $(u_l(l-x),v_l(l-x))$ and $(K_1,K_2)$ are the ordered upper and lower solution of \eqref{2.12}. Then arguing as above, we obtain the desired result.
\end{remark}
\section{Dynamics of \eqref{1.6}}

With the aid of the results in the former section, we now study the dynamics of \eqref{1.6}, involving the well-posedeness, spreading-vanishing dichotomy, as well as criteria for spreading and vanishing. The well-posedeness of \eqref{1.6}, i.e., Theorem \ref{t1.1},  can be proved by virtue of analogous arguments to those in \cite{WD, Wdcdsb2021}. So we omit the details.

Then we discuss the longtime behaviors of solution component $(u,v)$, in which the principal eigenvalue $\lambda(l)$ of \eqref{2.3} plays a crucial role. In what follows, we always assume that $(u,v,h)$ is the unique solution of \eqref{1.6}.

\begin{lem}\label{l3.1}If $h_{\yy}<\yy$, then
\bes\left\{\!\begin{array}{ll}\label{3.1}
\dd\lim_{t\to\yy}\|u(t,x)+v(t,x)\|_{C([0,h(t)])}=0, ~ ~ \lambda(h_{\yy})\ge0,\\
\dd\lim_{t\to\yy}{\rm e}^{kt}\|u(t,x)+v(t,x)\|_{C([0,h(t)])}=0, ~ ~ \forall k\in(0,\lambda(h_{\yy})), ~ ~ {\rm if ~ }\lambda(h_{\yy})>0.
 \end{array}\right.
\ees
\end{lem}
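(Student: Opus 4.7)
Plan. I would split the lemma into three pieces and handle them in the order (i) $\lambda(h_\infty)\ge 0$, (ii) $\|u(t,\cdot)+v(t,\cdot)\|_{C([0,h(t)])}\to 0$, and (iii) the exponential rate when $\lambda(h_\infty)>0$.

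For (i) I would argue by contradiction. Suppose $\lambda(h_\infty)<0$; from the monotonicity and continuity of $\lambda(l)$ in $l$ recorded after Lemma \ref{l2.2}, pick $l^*\in(l_0,h_\infty)$ with $\lambda(l^*)<0$, and choose $T_0$ so that $h(t)>l^*$ for all $t\ge T_0$. Restricted to $[0,l^*]$, the pair $(u,v)$ is then a supersolution of the fixed-boundary problem \eqref{2b.4} with $l=l^*$, since $u(t,l^*),v(t,l^*)\ge 0$. By Lemma \ref{l2.3}(1), the solution of \eqref{2b.4} starting from $(u(T_0,\cdot),v(T_0,\cdot))|_{[0,l^*]}$ converges in $C^2$ to the positive steady state $(u_{l^*},v_{l^*})$. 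Parabolic comparison then forces $\liminf_{t\to\infty}u(t,x_0)\ge u_{l^*}(x_0)>0$ at any fixed $x_0\in(0,l^*)$. Combining this lower bound with $u(t,h(t))=0$ and the uniform $C^{2+\alpha}$ estimates obtained by straightening the free boundary via $y=h_\infty x/h(t)$ and applying Schauder theory (available thanks to Theorem \ref{t1.1}), a Taylor-type computation yields $|u_x(t,h(t))|\ge c_1>0$ for all large $t$. The Stefan condition then gives $h'(t)\ge\mu_1 c_1$, contradicting $\int_0^\infty h'(t)\,dt=h_\infty-h_0<\infty$.

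For (ii) and (iii) I would dominate $(u,v)$ by a linear cooperative problem. Extend $(u,v)$ by zero to $[0,h_\infty]$; by the concavity conditions in \textbf{(H)} one has $H(v)\le H'(0)v$, $G(u)\le G'(0)u$, and the jump of $u_x$ at $x=h(t)$ is upward, so the extension is a subsolution (in the distributional sense) of the linearized system
\begin{align*}
\bar u_t&=d_1\bar u_{xx}-a\bar u+H'(0)\bar v,\\
\bar v_t&=d_2\bar v_{xx}-b\bar v+G'(0)\bar u,
\end{align*}
on $[0,h_\infty]$, with $\mathbb{B}[\bar u](t,0)=\mathbb{B}[\bar v](t,0)=0$ and $\bar u(t,h_\infty)=\bar v(t,h_\infty)=0$. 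Take initial data $(M\phi,M\psi)$, with $M$ large enough to dominate $(u_0,v_0)$ and $(\phi,\psi)$ the positive eigenfunction of \eqref{2.3} at $l=h_\infty$. A separation-of-variables calculation gives $(\bar u,\bar v)(t,\cdot)=Me^{-\lambda(h_\infty)t}(\phi,\psi)$, and the comparison principle produces
\[
\|u(t,\cdot)+v(t,\cdot)\|_{C([0,h(t)])}\le Me^{-\lambda(h_\infty)t}\|\phi+\psi\|_{C([0,h_\infty])}.
\]
When $\lambda(h_\infty)>0$ this immediately yields both $\|u+v\|_C\to 0$ and the weighted estimate for every $k\in(0,\lambda(h_\infty))$. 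When $\lambda(h_\infty)=0$ the linear bound only provides boundedness, so I would exploit strict concavity: a time-dependent supersolution of the form $\epsilon(t)(\phi,\psi)$ driven by a scalar ODE $\epsilon'\le -C\epsilon^2$ (arising from $H'(0)\psi-H(\epsilon\psi)/\epsilon=O(\epsilon)$) forces $\epsilon(t)\to 0$; alternatively, Lemma \ref{l2.3}(1) excludes positive steady states on $[0,h_\infty]$ when $\lambda(h_\infty)=0$, so a standard $\omega$-limit argument on the extended problem gives the conclusion.

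The main obstacle is the quantitative gradient lower bound at the free boundary in step (i); managing it requires uniform regularity at the moving boundary, which is most conveniently obtained by the straightening change of variables combined with parabolic Schauder estimates. A secondary subtlety is the borderline case $\lambda(h_\infty)=0$ in step (ii), where one must extract the decay from the genuine concavity rather than from a purely linear comparison.
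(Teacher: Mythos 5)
Your overall strategy is viable but differs from the paper's in its logical ordering: the paper first proves $\|u+v\|_{C([0,h(t)])}\to0$ \emph{without} reference to $\lambda(h_\yy)$ (via uniform $C^1$ estimates, the uniform continuity of $h'$, $\int_0^\yy h'\,dt=h_\yy-h_0<\yy$, hence $h'\to0$, and then a cited lemma of Hopf/compactness type), and only afterwards deduces $\lambda(h_\yy)\ge0$ by comparison with the fixed-domain problem on $[0,h(T)]$; you invert this order. The inversion is workable in principle, but it is exactly what creates your two genuine gaps. First, the ``Taylor-type computation'' giving $|u_x(t,h(t))|\ge c_1$ does not go through as stated: your interior lower bound $\liminf_t u(t,\cdot)\ge u_{l^*}(\cdot)$ lives on $[0,l^*]$ with $l^*<h_\yy$ and degenerates as $x\to l^*$, so a second-order expansion of $u$ about $x=h(t)$ with $|u_{xx}|\le C$ only controls $u$ within distance $O(1/C)$ of the free boundary, which need not reach any point where $u$ is bounded below by a fixed constant; the quadratic remainder can swamp the linear term. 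The correct route here is the standard compactness-plus-Hopf argument (straighten the boundary, extract a convergent subsequence $u(t_n+\cdot,\cdot)$, apply the Hopf lemma to the limit at $x=h_\yy$), or equivalently the paper's route via $h'(t)\to0$; a pointwise Taylor estimate cannot replace it.

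Second, in the borderline case $\lambda(h_\yy)=0$ your supersolution $\epsilon(t)(\phi,\psi)$ with $\epsilon'\le-C\epsilon^2$ fails. The differential inequality required pointwise in $x$ is $\epsilon'\phi\ge-\bigl[\epsilon H'(0)\psi-H(\epsilon\psi)\bigr]$, and since $\phi=p\psi$ with $p>0$ the concavity gain satisfies $\bigl[\epsilon H'(0)\psi-H(\epsilon\psi)\bigr]/\phi=O(\epsilon^2\psi)\to0$ as $x$ approaches a zero of the eigenfunctions; taking the infimum over $x$ forces $\epsilon'\ge0$, so no strictly decaying $\epsilon$ is admissible. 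Your fallback ($\omega$-limit / nonexistence of positive steady states when $\lambda(h_\yy)\ge0$) is the right idea, but it should be implemented by dominating $(u,v)$ by the solution of the fixed-domain problem \eqref{2b.4} on $[0,h_\yy]$ with initial data $(K_1,K_2)$ as in \eqref{2.11a}: that solution is nonincreasing in $t$ and, by Lemma \ref{l2.3}(1), must converge to $(0,0)$ since no positive steady state exists for $\lambda(h_\yy)\ge0$. Your treatment of the exponential rate for $\lambda(h_\yy)>0$ coincides with the paper's Step 2 (the supersolution $M{\rm e}^{-kt}(\phi,\psi)$), and is fine; note only that the comparison is most cleanly performed on the non-cylindrical domain $\{0<x<h(t)\}$, where $\bar u(t,h(t))=M{\rm e}^{-kt}\phi(h(t))>0=u(t,h(t))$, rather than via the distributional extension by zero.
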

\begin{proof}The proof will be completed by two steps.

{\bf Step 1.} This step involves the proof of the assertions in the first line of \eqref{3.1}. By using the similar arguments as in the proofs of \cite[Theorem 2.3]{LLW}, we can obtain the following estimates
\[\|u(t,\cdot)+v(t,\cdot)\|_{C^1([0,h(t)])}+\|h'\|_{C([1,\yy))}\le C ~ ~ \forall t\ge1,\]
where $C$ only depends on the parameters in \eqref{1.6} and the initial function $(u_0,v_0)$. The above estimate implies that $h'$ is uniformly continuous in $[0,\yy)$, which combined with $h_{\yy}<\yy$ yields $h'(t)\to0$ as $t\to\yy$. Since the positivity of $v$, the solution component $(u,h)$ satisfies
\bess\left\{\!\begin{array}{ll}
u_t-d_1u_{xx}\ge-au, &t>0,~x\in(0,h(t)),\\[1mm]
\mathbb{B}[u](t,0)=u(t,h(t))=0, &t>0,\\[1mm]
h'(t)\ge-\mu_1 u_x(t,h(t)), & t>0,\\[1mm]
h(0)=h_0, ~ u(0,x)=u_{0}(x)\ge,\not\equiv0,&0\le x\le h_0.
 \end{array}\right.
 \eess
 Then in view of \cite[Lemma 3.3]{LLW}, we immediately have $\lim_{t\to\yy}\|u(t,x)\|_{C([0,h(t)])}=0$. Analogously, we can derive $\lim_{t\to\yy}\|v(t,x)\|_{C([0,h(t)])}=0$. Thus to finish this step, it remains to prove $\lambda(h_{\yy})\ge0$. Assume on the contrary that $\lambda(h_{\yy})<0$. By continuity, there exists a $T>0$ such that $\lambda(h(T))<0$. Consider the following auxiliary problem
 \bess\left\{
 \!\begin{array}{ll}
 \ud u_t=d_1\ud u_{xx}-a\ud u+H(\ud v), &t>T,~x\in(0,h(T)),\\[1mm]
\ud v_t=d_2\ud v_{xx}-b\ud v+G(\ud u), &t>T,~x\in(0,h(T)),\\[1mm]
\mathbb{B}[\ud u](t,0)=\mathbb{B}[\ud v](t,0)=\ud u(t,h(T))=\ud v(t,h(T))=0, &t>T,\\[1mm]
\ud u(T,x)=\frac{1}{2}u(T,x), ~ \ud v(0,x)=\frac{1}{2}v(T,x), &0\le x\le h(T).
 \end{array}
\right.
 \eess
 By the comparison principle for parabolic equations of cooperative systems, we have $(u,v)\ge (\ud u,\ud v)$ for $t\ge T$ and $x\in[0,h(T)]$. Notice that $\lambda(h(T))<0$. From Lemma \ref{l2.3}, it follows that $(\ud u,\ud v)\to(u_{h(T)},v_{h(T)})$ in $[C^2([0,h(T)])]^2$ as $t\to\yy$, where $(u_{h(T)},v_{h(T)})$ is the unique positive solution of \eqref{2.6}. Thus $\liminf_{t\to\yy}(u,v)\ge (u_{h(T)},v_{h(T)})$ uniformly in $[0,h(T)]$, which contradicts $\lim_{t\to\yy}\|u(t,x)+v(t,x)\|_{C([0,h(t)])}=0$. So $\lambda(h_{\yy})\ge0$. This step is finished.

 {\bf Step 2.} In this step, we prove the assertion in the second line of \eqref{3.1}. Recall $\lambda(h_{\yy})>0$. Let $(\phi,\psi)$ be the positive eigenfunction of $\lambda(h_{\yy})$. Define $(\bar{u},\bar{v})=(M{\rm e}^{-kt}\phi,M{\rm e}^{-kt}\psi)$ where $k\in(0,\lambda(h_{\yy}))$ and $M$ is to be determined later. Straightforward computations yields that for $t>0$ and $x\in(0,h_{\yy})$,
 \bess
 &&\bar{u}_t-d_1\bar{u}_{xx}+a\bar{u}-H(\bar{v})\\
 &&=M{\rm e}^{-kt}\kk(-k\phi+\lambda(h_{\yy})+H'(0)\psi
 -\frac{H(\bar{v})}{M{\rm e}^{-kt}}\rr)\ge M{\rm e}^{-kt}\big[-k\phi+\lambda(h_{\yy})\big]\ge0.
 \eess
 Similarly, we have
 \[\bar{v}_t\ge d_2\bar{v}_{xx}-b\bar{v}+G(\bar{u}) ~ ~ {\rm for ~ }(t,x)\in(0,\yy)\times(0,h_{\yy}).\]
Moreover, it is easy to verify that there is a $M\gg1$ such that $(\bar{u}(0,x),\bar{v}(0,x))=(M\phi(x),M\psi(x))\ge(u_0(x),v_0(x))$ for $x\in[0,h_0]$. Clearly, $\mathbb{B}[\bar{u}](0,0)=\mathbb{B}[\bar{v}](0,0)=\bar{u}(0,h_{\yy})=\bar{v}(0,h_{\yy})=0$. By a comparison argument, we have $(u,v)\le (\bar{u},\bar{v})$ for $t\ge0$ and $x\in[0,h(t)]$. The desired result follows from the definition of $(\bar{u},\bar{v})$, which completes the step 2. The proof is ended.
\end{proof}

The above lemma shows that if vanishing occurs, $(u,v)\to(0,0)$ uniformly in $[0,h(t)]$ as $t\to\yy$ which implies that the epidemic will disappear in the long run. Next we consider the case spreading.

\begin{lem}\label{l3.2}If $h_{\yy}=\yy$, then
\bess\left\{\!\begin{array}{ll}
\dd\lim_{t\to\yy}(u,v)=(U,V) ~ {\rm in ~ }C_{\rm loc}([0,\yy)) {\rm ~ when ~ operator ~ }\mathbb{B}[w]=w,\\
\dd\lim_{t\to\yy}(u,v)=(u^*,v^*) ~ {\rm in ~ }C_{\rm loc}([0,\yy)) {\rm ~ when ~ operator ~ }\mathbb{B}[w]=w',
 \end{array}\right.
\eess
where $(u^*,v^*)$ is given by \eqref{1.2} and $(U,V)$ is the unique bounded positive solution of \eqref{2.8}.
\end{lem}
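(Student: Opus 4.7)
The plan is to sandwich $(u,v)$ between a sub-solution and a super-solution on each fixed interval $[0,L]$ with $L>l_0$ — eventually contained in $[0,h(t)]$ since $h_\yy=\yy$ — and then to let $L\to\yy$ via Lemmas~\ref{l2.3}(2) and \ref{l2.4}(2). As a coarse first upper bound, let $(\hat U(t),\hat V(t))$ solve the spatially-homogeneous ODE with initial datum $(K_1,K_2)$ from \eqref{2.11a}. Cooperativity together with $aK_1=H(K_2)$ and $G(K_1)<bK_2$ forces this trajectory to decrease monotonically to $(u^*,v^*)$. Viewing $(\hat U(t),\hat V(t))$ as a constant-in-$x$ super-solution of \eqref{1.6} and invoking Lemma~\ref{l2.1} gives $u(t,x)\le\hat U(t)$, $v(t,x)\le\hat V(t)$ on $[0,h(t)]$, hence $\limsup_{t\to\yy}(u,v)\le(u^*,v^*)$ uniformly in $x$. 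This already matches the Neumann target.

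For the lower bound, fix $L>l_0$ and choose $T_L$ with $h(T_L)>L$. By the strong maximum principle and the Hopf lemma, a small $\ep>0$ exists with $(u(T_L,x),v(T_L,x))\ge\ep(u_L(x),v_L(x))$ on $[0,L]$, where $(u_L,v_L)$ is the positive steady state of \eqref{2.6} (with boundary operator $\mathbb{B}$ at $x=0$) from Lemma~\ref{l2.3}(1). Launching the corresponding parabolic IBVP on $[0,L]$ from this initial datum produces $(\ud u,\ud v)$, which converges to $(u_L,v_L)$ by the global asymptotic stability in Lemma~\ref{l2.3}(1). The parabolic comparison principle on the fixed sub-domain $[0,L]\subset[0,h(t)]$ for $t\ge T_L$ applies since $u(t,L)>0=\ud u(t,L)$ and the $x=0$ condition matches; thus $(u,v)\ge(\ud u,\ud v)$ on $[0,L]$ for $t\ge T_L$, so $\liminf_{t\to\yy}(u,v)\ge(u_L,v_L)$ on $[0,L]$. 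Letting $L\to\yy$ and using Lemma~\ref{l2.3}(2) delivers $\liminf_{t\to\yy}(u,v)\ge(U,V)$ in the Dirichlet case and $\liminf_{t\to\yy}(u,v)\ge(u^*,v^*)$ in the Neumann case, locally uniformly in $x$. Combined with the coarse upper bound, this completes the Neumann statement.

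For the Dirichlet case a sharper upper bound is still needed. Fix $L>l_0$ and let $(\bar u,\bar v)$ solve the IBVP on $[0,L]$ with Dirichlet at $x=0$, non-homogeneous values $\bar u(t,L)=K_1$, $\bar v(t,L)=K_2$, and constant initial datum $(K_1,K_2)$ at time $T_L$. Comparison with the ODE from the first paragraph shows $(\bar u,\bar v)$ is nonincreasing in $t$, and Lemma~\ref{l2.4}(1) identifies its $t\to\yy$ limit as the unique positive solution $(u^\sharp_L,v^\sharp_L)$ of \eqref{2.12}. Comparison on $[0,L]$ for $t\ge T_L$ applies because the coarse ODE bound gives $u(t,L)\le K_1$ and $v(t,L)\le K_2$; hence $(u,v)\le(\bar u,\bar v)$ on $[0,L]$, so $\limsup_{t\to\yy}(u,v)\le(u^\sharp_L,v^\sharp_L)$, and passing $L\to\yy$ via Lemma~\ref{l2.4}(2) yields $\limsup_{t\to\yy}(u,v)\le(U,V)$. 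The squeeze closes the Dirichlet case.

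The main technical obstacle is arranging the boundary matching at $x=L$: the auxiliary IBVPs are posed on the fixed interval $[0,L]$ whereas $(u,v)$ lives on the moving domain $[0,h(t)]\supset[0,L]$, so the comparison principle must cross this mismatch. The strict positivity $u(t,L)>0$ (from $L<h(t)$ and the strong maximum principle) handles the inequality for the sub-solution, while the coarse ODE bound $u(t,L)\le K_1$, $v(t,L)\le K_2$ handles it for the super-solution; cooperativity of the system then makes the standard maximum-principle argument close simultaneously for both components.
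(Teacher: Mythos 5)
Your proposal is correct and follows essentially the same route as the paper: a sandwich on each fixed interval $[0,L]$ with $L>l_0$ between the parabolic problem converging to the solution of \eqref{2.6} (Lemma \ref{l2.3}) from below and, in the Dirichlet case, the one converging to the solution of \eqref{2.12} (Lemma \ref{l2.4}) from above, followed by $L\to\yy$; the paper likewise uses the spatially homogeneous ODE bound for the Neumann upper estimate and the condition $(K_1,K_2)\ge(C_1,C_2)$ to justify the boundary matching at $x=L$. The only cosmetic difference is that you make the choice of initial data for the auxiliary problems explicit ($\ep(u_L,v_L)$ below, $(K_1,K_2)$ above) and obtain the bound $u(t,L)\le K_1$, $v(t,L)\le K_2$ from the decreasing ODE trajectory started at $(K_1,K_2)$ rather than from the a priori bounds of Theorem \ref{t1.1}.
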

\begin{proof} Necessarily, $\mathcal{R}_0>1$ (see Lemma \ref{l3.3}). The proof is divided into two steps.

{\bf Step 1.} In this step, we prove the statement for the Dirichlet boundary condition $\mathbb{B}[w]=w$.  For any large $l>l_0$, there exists a $T>0$ such that $h(T)>l$.  Consider the following two auxiliary problems
\bess\left\{\!\begin{array}{ll}
 \ud u_t=d_1\ud u_{xx}-a\ud u+H(\ud v), &t>T,~x\in(0,l),\\[1mm]
\ud v_t=d_2\ud v_{xx}-b\ud v+G(\ud u), &t>T,~x\in(0,l),\\[1mm]
\ud u(t,0)=\ud v(t,0)=\ud u(t,l)=\ud v(t,l)=0, &t>T,\\[1mm]
\ud u(T,x)=\ud u_0(x), ~ \ud v(T,x)=\ud v_0(x), &0\le x\le l,\\
 \end{array}
\right.
 \eess
 and
 \bess\left\{\!\begin{array}{ll}
 \ol u_t=d_1\ol u_{xx}-a\ol u+H(\ol v), &t>T,~x\in(0,l),\\[1mm]
\ol v_t=d_2\ol v_{xx}-b\ol v+G(\ol u), &t>T,~x\in(0,l),\\[1mm]
\ol u(t,0)=\ol v(t,0)=0, ~ \ol u(t,l)=K_1, ~ \ol v(t,l)=K_2, &t>T,\\[1mm]
\ol u(T,x)=\ol u_0(x), ~ \ol v(T,x)=\ol v_0(x), &0\le x\le l,\\
 \end{array}
\right.
 \eess
where $(\ud u_0,\ud v_0)\le (u(T,x),v(T,x))\le (\ol u_0,\ol v_0)$ in $[0,l]$, $(K_1,K_2)$ satisfies \eqref{2.11} and $(K_1,K_2)\ge(C_1,C_2)$ which is given in Theorem \ref{t1.1}. Moreover, the compatibility conditions hold for $(\ud u_0,\ud v_0)$ and $(\ol u_0,\ol v_0)$. By the comparison principle for parabolic systems, we have
 \[(\ud u(t,x),\ud v(t,x))\le(u(t,x),v(t,x))\le(\ol u(t,x),\ol v(t,x)) {\rm ~ ~ for }~ t\ge T ~ {\rm and ~ }x\in[0,l].\]
  Using Lemmas \ref{l2.3} and \ref{l2.4} yields  $(\ud u(t,x),\ud v(t,x))\to(u_l(x),v_l(x))$ and $(\ol u(t,x),\ol v(t,x))\to(\bar u_l(x),\bar v_l(x))$ in $C([0,l])$ as $t\to\yy$, where $(u_l,v_l)$ and $(\bar{u}_l,\bar{v}_l)$ are the unique positive solution of \eqref{2.6} and \eqref{2.12}, respectively. Thus we see
  \[(u_l(x),v_l(x))\le\liminf_{t\to\yy}(u(t,x),v(t,x))\le\limsup_{t\to\yy}(u(t,x),v(t,x))\le(\bar{u}_l(x),\bar{v}_l(x)) {\rm ~ in ~ }C([0,l]).\]
  Since both $(u_l(x),v_l(x))$ and $(\bar{u}_l(x),\bar{v}_l(x))$ converge to $(U,V)$ in $C_{\rm loc}([0,\yy))$ as $l\to\yy$, where $(U,V)$ is the unique bounded positive solution of \eqref{2.8}, we can derive the result as wanted. The step 1 is finished.

{\bf Step 2.} We now deal with the case $\mathbb{B}[w]=w'$. Consider the ODE system
  \[\ol u_t=-a\ol u+H(\ol v), ~ \ol v_t=-b\ol v+G(\ol u); ~ ~ \ol u(0)=\|u_0\|_{C([0,h_0])}, ~ \ol v(0)=\|v_0\|_{C([0,h_0])}.\]
  In view of $\mathcal{R}_0>1$, we see that $(\ol u,\ol v)\to(u^*,v^*)$ as $t\to\yy$. Using a comparison argument yields that $\limsup_{t\to\yy}(u(t,x),v(t,x))\le(u^*,v^*)$ uniformly in $[0,\yy)$.  For any $l>l_0$, choose $T$ large enough such that $h(T)>l$.
 Let (\ud u, \ud v) be the solution of problem
 \bess\left\{\!\begin{array}{ll}
 \ud u_t=d_1\ud u_{xx}-a\ud u+H(\ud v), &t>T,~x\in(0,l),\\[1mm]
\ud v_t=d_2\ud v_{xx}-b\ud v+G(\ud u), &t>T,~x\in(0,l),\\[1mm]
\ud u_x(t,0)=\ud v_x(t,0)=\ud u(t,l)=\ud v(t,l)=0, &t>T,\\[1mm]
\ud u(T,x)=\ud u_0(x), ~ \ud v(T,x)=\ud v_0(x), &0\le x\le l,\\
 \end{array}
\right.
 \eess
 where $(\ud u_0,\ud v_0)\le(u(T,x),v(T,x))$ in $[0,l]$ and the compatibility condition holds for $(\ud u_0,\ud v_0)$. From a comparison method and Lemma \ref{l2.3}, we immediately obtain $\liminf_{t\to\yy}(u(t,x),v(t,x))\ge(u^*,v^*)$ locally uniformly in $[0,\yy)$, which together with our early result, completes the step 2. Therefore, the proof is ended.
\end{proof}

Combining Lemmas \ref{l3.1} with \ref{l3.2}, we immediately obtain a spreading-vanishing dichotomy, namely, Theorem \ref{t1.2}.
In what follows, we investigate the criteria for spreading and vanishing.

\begin{lem}\label{l3.3}If $\mathcal{R}_0\le1$, then vanishing happens. Moreover,
\[h_{\yy}\le h_0+\frac{\max\{\mu_1,\mu_2\}}{\min\{d_1,\frac{H'(0)d_2}{b}\}}\int_{0}^{h_0}
\left(u_0(x)+\frac{H'(0)}{b}v_0(x)\right)\dx.\]
\end{lem}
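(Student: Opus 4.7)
The plan is to construct a conserved-style quantity combining $u$, $v$, and $h$, and exploit the sub-tangent inequalities $H(v)\le H'(0)v$, $G(u)\le G'(0)u$ (coming from condition \textbf{(H)}) together with $\mathcal{R}_0\le 1$ to show that this quantity is monotone, which forces $h(t)$ to stay bounded.

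First I would define
\[
F(t)=\int_{0}^{h(t)}\!\Bigl(u(t,x)+\tfrac{H'(0)}{b}\,v(t,x)\Bigr)\,\dx,
\]
with the weight $\alpha:=H'(0)/b$ chosen so that the two reaction terms cancel optimally. Differentiating and using $u(t,h(t))=v(t,h(t))=0$ kills the boundary-speed term, leaving
\[
F'(t)=d_1 u_x(t,h(t))+\alpha d_2 v_x(t,h(t))-d_1 u_x(t,0)-\alpha d_2 v_x(t,0)+\int_0^{h(t)}\!\bigl[-au+H(v)+\alpha(G(u)-bv)\bigr]\dx .
\]
The bracketed integrand, after applying $H(v)\le H'(0)v$ and $G(u)\le G'(0)u$, becomes $(\alpha G'(0)-a)u+(H'(0)-\alpha b)v$, which with my choice of $\alpha$ equals $(H'(0)G'(0)/b - a)u\le 0$ precisely because $\mathcal{R}_0\le 1$. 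The boundary terms at $x=0$ are nonpositive in both the Dirichlet case (where $u(t,0)=0$ forces $u_x(t,0)\ge 0$ by Hopf, so $-d_1 u_x(t,0)\le 0$, and similarly for $v$) and the Neumann case (where they vanish identically).

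The remaining task is to compare $d_1 u_x(t,h(t))+\alpha d_2 v_x(t,h(t))$ with $h'(t)$. Since $u_x(t,h(t))\le 0$ and $v_x(t,h(t))\le 0$ (both by Hopf applied at $x=h(t)$), I can estimate
\[
d_1 u_x(t,h(t))+\alpha d_2 v_x(t,h(t))\le \min\!\bigl\{d_1,\alpha d_2\bigr\}\,\bigl(u_x(t,h(t))+v_x(t,h(t))\bigr),
\]
and then bound $u_x(t,h(t))+v_x(t,h(t))\le h'(t)/(-\max\{\mu_1,\mu_2\})$ by distributing the Stefan condition $h'(t)=-\mu_1 u_x(t,h(t))-\mu_2 v_x(t,h(t))$ over $\max\{\mu_1,\mu_2\}$. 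Combining everything gives
\[
F'(t)\le -\,\frac{\min\{d_1,H'(0)d_2/b\}}{\max\{\mu_1,\mu_2\}}\,h'(t).
\]

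Finally, I would integrate this differential inequality from $0$ to $t$ and use $F(t)\ge 0$ to obtain
\[
h(t)\le h_0+\frac{\max\{\mu_1,\mu_2\}}{\min\{d_1,H'(0)d_2/b\}}\,F(0),
\]
which is exactly the stated bound on $h_\yy=\lim_{t\to\yy}h(t)$; in particular $h_\yy<\yy$, so vanishing occurs. The only moderately delicate step is choosing the weight $\alpha=H'(0)/b$ and realising that $\mathcal{R}_0\le 1$ is exactly what is needed to make the reaction integrand nonpositive; everything else is bookkeeping with signs, Hopf, and the Stefan condition.
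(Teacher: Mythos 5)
Your proposal is correct and follows essentially the same route as the paper: the same weighted mass $\int_0^{h(t)}(u+\tfrac{H'(0)}{b}v)\,dx$, the same use of the concavity bounds $H(v)\le H'(0)v$, $G(u)\le G'(0)u$ together with $\mathcal{R}_0\le1$ to kill the reaction integral, the same sign analysis of the boundary fluxes, and the same $\min/\max$ comparison with the Stefan condition followed by integration in $t$. If anything, your write-up is slightly more careful than the paper's displayed computation, which omits the factor $\tfrac{H'(0)}{b}$ in front of $d_2v_x(t,h(t))$.
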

\begin{proof}
By the Hopf lemma for parabolic equations, we see $u_x(t,h(t))<0$, $v_x(t,h(t))<0$, $u_x(t,0)>0$ and $v_x(t,0)>0$ if $\mathbb{B}[w]=w$. By virtue of {\bf (H)} and $\mathcal{R}_0\le1$, we have
  \bess
\frac{\rm d}{{\rm d}t}\int_{0}^{h(t)}\left(u+\frac{H'(0)}{b}v\right)\dx&=& \int_{0}^{h(t)}\left(u_t+\frac{H'(0)}{b}v_t\right)\dx\\[2mm]
&=&d_1u_x(t,h(t))+d_2v_x(t,h(t))-d_1u_x(t,0)-d_2v_x(t,0)\\[2mm]
&&+\int_{0}^{h(t)}\kk(-au+H(v)-H'(0)v+\frac{H'(0)}{b}G(u)\rr)\dx\\
&\le&d_1u_x(t,h(t))+d_2v_x(t,h(t))\le\frac{-\min\{d_1,\frac{H'(0)d_2}{b}\}}{\max\{\mu_1,\mu_2\}}h'.
\eess
Integrating the above inequality from $0$ to $t$ yields
\[h(t)\le h_0+\frac{\max\{\mu_1,\mu_2\}}{\min\{d_1,\frac{H'(0)d_2}{b}\}}
\int_{0}^{h_0}\left(u_0(x)+\frac{H'(0)}{b}v_0(x)\right)\dx,\]
which completes the proof.
\end{proof}

Next we focus on the case $\mathcal{R}_0>1$ which is assumed to be valid in the remainder of this section. According to Lemma \ref{l2.2}, there exists a unique critical length $l_0$, defined by
\bes\left\{\!\begin{array}{ll}\label{3.3}
l_0=\dd\pi\sqrt{\frac{ad_2+bd_1+\sqrt{(ad_2-bd_1)^2
 +4d_1d_2H'(0)G'(0)}}{2(H'(0)G'(0)-ab)}}\;\;{\rm ~ when ~ operator ~ }\mathbb{B}[w]=w,\\[4mm]
 l_0=\dd\frac{\pi}{2}\sqrt{\frac{ad_2+bd_1+\sqrt{(ad_2-bd_1)^2
 +4d_1d_2H'(0)G'(0)}}{2(H'(0)G'(0)-ab)}}\;\; {\rm ~ when ~ operator ~  }\mathbb{B}[w]=w',
 \end{array}\right.
\ees
such that $\lambda(l_0)=0$ and $\lambda(l)(l-l_0)<0$ for $l\neq l_0$. Thus taking advantaging of Lemma \ref{l3.1}, we directly have the following result.

\begin{lem}\label{l3.4}If vanishing occurs, then $h_{\yy}\le l_0$ where $l_0$ is defined in \eqref{3.3}. This implies that if $h_0\ge l_0$, then spreading happens.
\end{lem}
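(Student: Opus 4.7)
The statement is essentially a direct corollary of Lemma \ref{l3.1} together with the monotonicity properties of $\lambda(l)$ established after Lemma \ref{l2.2}, so the ``proof'' is really just a careful bookkeeping of what has already been shown. First I would invoke Lemma \ref{l3.1}: if vanishing occurs, i.e.\ $h_\yy<\yy$, then $\lambda(h_\yy)\ge 0$.

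Next I would recall from the three consequences of Lemma \ref{l2.2} that $\lambda(\cdot)$ is strictly decreasing on $(0,\yy)$, tends to $\frac12\bigl(a+b-\sqrt{(a-b)^2+4G'(0)H'(0)}\bigr)$ as $l\to\yy$ and to $+\yy$ as $l\to 0^+$, and that the limit at infinity is negative precisely when $\mathcal{R}_0>1$. Since we are assuming $\mathcal{R}_0>1$, the critical length $l_0$ from \eqref{3.3} is the unique zero of $\lambda(\cdot)$, with $\lambda(l)>0$ for $l<l_0$ and $\lambda(l)<0$ for $l>l_0$. Combining this with $\lambda(h_\yy)\ge 0$ immediately forces $h_\yy\le l_0$.

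For the converse statement, suppose $h_0\ge l_0$ and argue by contradiction, assuming vanishing happens. By the Hopf lemma discussion preceding Theorem \ref{t1.2}, we have $h'(t)>0$ for all $t>0$, so $h_\yy>h(t)>h(0)=h_0\ge l_0$ for any $t>0$. This contradicts the first part $h_\yy\le l_0$. Hence $h_\yy=\yy$ and spreading must occur.

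\textbf{Main obstacle.} There is essentially none at this stage: both Lemma \ref{l3.1} (which gives $\lambda(h_\yy)\ge 0$ in the vanishing regime) and the monotonicity/limits of $\lambda(l)$ from Lemma \ref{l2.2} do all the heavy lifting. The only thing to be careful about is the borderline case $h_0=l_0$, where one must use the \emph{strict} monotonicity of $h(t)$ (coming from Hopf's lemma applied to the free-boundary equations for $u$ and $v$) to rule out $h_\yy=l_0$.
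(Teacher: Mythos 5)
Your proposal is correct and follows exactly the route the paper intends: the paper simply states that the lemma follows directly from Lemma \ref{l3.1} together with the monotonicity and sign properties of $\lambda(l)$ from Lemma \ref{l2.2}, and your argument (including the careful treatment of the borderline case $h_0=l_0$ via the strict increase of $h(t)$) supplies precisely the omitted details.
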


The above result shows that if initial habitat is beyond the critical size $l_0$, then spreading must occur. Our next conclusion suggests that if $h_0<l_0$ but $\mu_1$ or $\mu_2$ is large enough, then spreading also will happen.

\begin{lem}\label{l3.5} Assume that $h_0<l_0$. There exists a $\ol \mu_1>0$ ($\ol \mu_2>0$), which is independent of $\mu_2$ ($\mu_1$) such that spreading happens if $\mu_1\ge\ol\mu_1$ ($\mu_2\ge\ol\mu_2$).
\end{lem}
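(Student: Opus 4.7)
The plan is to prove existence of $\bar\mu_1$; the analogous statement about $\bar\mu_2$ follows from the mirror argument with $(u,\mu_1)$ and $(v,\mu_2)$ interchanged. By Lemma \ref{l3.4}, it suffices to exhibit, for all sufficiently large $\mu_1$, a finite time $T_0>0$ with $h(T_0)>l_0$: then $h_\yy>l_0$, which rules out vanishing and so forces spreading. From the monotone dependence of $(u,v,h)$ on $(\mu_1,\mu_2)$ noted right after Lemma \ref{l2.1}, I need only produce one such $\mu_1$; and if the lower solution I construct does not explicitly involve $\mu_2$, the resulting threshold will automatically be independent of $\mu_2$, because the contribution $-\mu_2\underline v_x(t,\underline h(t))$ to the Stefan inequality has favourable (nonnegative) sign once $\underline v_x(t,\underline h(t))\le0$ and may simply be dropped.

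I would implement this through the standard lower-solution technique for one-phase Stefan problems. Fix $L>l_0$ and $T_0>0$, set $\underline h(t)=h_0+(L-h_0)t/T_0$, and take $\underline u(t,x)=\varepsilon\phi(x/\underline h(t))$, $\underline v(t,x)=\varepsilon\psi(x/\underline h(t))$ for $x\in[0,\underline h(t)]$, with smooth profiles $(\phi,\psi)$ on $[0,1]$, positive in $(0,1)$, vanishing at $y=1$ with $\phi'(1)<0,\;\psi'(1)\le 0$, and satisfying the appropriate boundary condition at $y=0$ dictated by $\mathbb{B}$. The Stefan-type inequality $\underline h'(t)\le -\mu_1\underline u_x(t,\underline h(t))-\mu_2\underline v_x(t,\underline h(t))$ becomes $(L-h_0)/T_0\le\mu_1\varepsilon|\phi'(1)|/\underline h(t)$, which holds whenever $\mu_1\ge\bar\mu_1:=L(L-h_0)/(T_0\varepsilon|\phi'(1)|)$, a quantity manifestly independent of $\mu_2$. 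The initial ordering $(\underline u(0,\cdot),\underline v(0,\cdot))\le(u_0,v_0)$ on $[0,h_0]$ follows from condition {\bf(I)} once $\varepsilon$ is small enough. Lemma \ref{l2.1} then yields $h(T_0)\ge\underline h(T_0)=L>l_0$, and the proof is complete.

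The main obstacle is the verification of the two parabolic inequalities $\underline u_t\le d_1\underline u_{xx}-a\underline u+H(\underline v)$ and $\underline v_t\le d_2\underline v_{xx}-b\underline v+G(\underline u)$ on $\{0<t<T_0,\;0<x<\underline h(t)\}$. Writing $y=x/\underline h(t)$ and expanding $H(\varepsilon\psi)=\varepsilon H'(0)\psi+O(\varepsilon^2)$, $G(\varepsilon\phi)=\varepsilon G'(0)\phi+O(\varepsilon^2)$, both reduce to pointwise inequalities of the form
\[
-y\phi'(y)\tfrac{\underline h'(t)}{\underline h(t)}-\tfrac{d_1\phi''(y)}{\underline h(t)^2}+a\phi(y)-H'(0)\psi(y)\le0
\]
(and its $\psi$-analogue with $(d_2,b,G'(0))$). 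The dangerous region is near $y=1$, where the convective term $-y\phi'(y)\underline h'(t)/\underline h(t)\to|\phi'(1)|\underline h'(t)/\underline h(t)>0$ while the reaction vanishes; this must be overcome by choosing $(\phi,\psi)$ with $\phi''(1),\psi''(1)>0$, so that $-d_1\phi''(y)/\underline h(t)^2<0$ dominates, which forces the quantitative bound
\[
T_0\ge \frac{L(L-h_0)\,|\phi'(1)|}{d_1\,\phi''(1)}
\]
and its $\psi$-analogue. On the interior of $(0,1)$, taking $\psi=k\phi$ for a constant $k\in(a/H'(0),G'(0)/b)$—a nonempty window precisely because $\mathcal{R}_0>1$—renders $a\phi-H'(0)\psi$ and $b\psi-G'(0)\phi$ strictly negative, and they dominate the remaining bounded convection/diffusion contributions once $T_0$ is sufficiently large. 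Explicit profiles with all the required sign conditions at both $y=0$ and $y=1$ can be built by perturbing the standard trigonometric ones in each of the Dirichlet and Neumann cases; once such $(\phi,\psi,T_0,\varepsilon)$ are in place, $\bar\mu_1=L(L-h_0)/(T_0\varepsilon|\phi'(1)|)$ is the announced threshold.
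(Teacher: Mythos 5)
Your overall strategy (push $h$ past $l_0$ in finite time with a lower solution whose Stefan inequality involves only $\mu_1$, then invoke Lemma \ref{l3.4}) is the right one and is essentially what the paper does, but the specific lower solution you build cannot work, and the step where you claim to close the parabolic inequalities is false. The profiles $\underline u=\ep\phi(x/\underline h(t))$, $\underline v=\ep\psi(x/\underline h(t))$ have \emph{time-independent amplitude}, so at $t=0^+$, where $\underline h\approx h_0$ and $\underline h'/\underline h=O(1/T_0)$ is negligible, the two parabolic inequalities reduce to requiring that $(\ep\phi(\cdot/h_0),\ep\psi(\cdot/h_0))$ be a positive subsolution of the elliptic system on $(0,h_0)$ with the homogeneous boundary conditions. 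Pairing such a subsolution with the positive eigenfunction of \eqref{2.3} on $(0,h_0)$ (exactly as in the uniqueness argument of Lemma \ref{l2.2}, using $H(\ep\psi)\le \ep H'(0)\psi$ and $G(\ep\phi)\le\ep G'(0)\phi$ from {\bf(H)}) forces $\lambda(h_0)\le 0$, i.e.\ $h_0\ge l_0$ --- contradicting the standing hypothesis $h_0<l_0$. Concretely, the flaw in your argument is the sentence asserting that the reaction terms ``dominate the remaining bounded convection/diffusion contributions once $T_0$ is sufficiently large'': only the convection term is $O(1/T_0)$; the diffusion term $-d_i\phi''(y)/\underline h(t)^2$ does not shrink as $T_0\to\yy$, and on the set where $\phi''<0$ it demands $H'(0)k-a\ge d_1|\phi''|/(\phi\,h_0^2)$ (and its $\psi$-analogue), which together reproduce the impossible condition $\lambda(h_0)\le0$. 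No choice of profiles or of $k$ evades this, since the obstruction is the positivity of the principal eigenvalue on the initial interval.

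The repair is the one used in the paper's own Lemma \ref{l3.8} and in the result the paper cites for this lemma, \cite[Lemma 3.2]{WZrwa15}: give the lower solution a decaying amplitude, e.g.\ $\underline u=\rho(t+\ep)^{-\sigma}\Phi(x/\underline h(t))$ with $\sigma$ large, so that $\underline u_t$ contributes the term $-\sigma(t+\ep)^{-1}\underline u$, which is as negative as one likes and absorbs the eigenvalue defect on short intervals; the price is that $|\underline u_x(t,\underline h(t))|$ is then very small at the terminal time, which is precisely why $\ol\mu_1$ must be large. Note also that the paper sidesteps the coupling entirely by discarding $H(v)\ge0$ and comparing $(u,h)$ with the \emph{scalar} problem $w_t= d_1w_{xx}-aw$, $h'\ge-\mu_1w_x$, to which the cited scalar lemma applies directly; the resulting threshold depends only on $(d_1,a,h_0,u_0,l_0)$ and is therefore independent of $\mu_2$ --- your sign argument for dropping the $-\mu_2\underline v_x$ term, and your derivation of $\ol\mu_1=L(L-h_0)/(T_0\ep|\phi'(1)|)$ from the Stefan inequality, are both fine and survive the repair unchanged. (A minor further point: condition {\bf(I)} does not guarantee $u_0'(h_0)<0$, so the initial ordering $\ep\phi(\cdot/h_0)\le u_0$ near $x=h_0$ may fail; one should start the comparison at a small positive time, where the Hopf lemma gives $u_x(t,h(t))<0$.)
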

\begin{proof}We only prove the assertion about $\mu_1$ since the other case can be handled analogously. By the maximum principle and Hopf lemma for parabolic equations, we have that $u\ge0$, $v\ge0$ and $u_x(t,h(t))<0$ and $v_x(t,h(t))<0$ for $t>0$ and $x\in[0,h(t)]$. Thus $(u,h)$ satisfies
\bess\left\{\!\begin{array}{ll}
u_t\ge d_1u_{xx}-au, &t>0,~x\in(0,h(t)),\\[1mm]
\mathbb{B}[u](t,0)=u(t,h(t))=0, &t>0,\\[1mm]
h'(t)>-\mu_1 u_x(t,h(t)), & t>0,\\[1mm]
h(0)=h_0, ~ u(0,x)=u_{0}(x),&0\le x\le h_0.
 \end{array}\right.
 \eess
 From \cite[Lemma 3.2]{WZrwa15}, there exists a $\ol\mu_1$ depending only on $(d_1, d,d_2,a, H'(0),G'(0),h_0,u_0)$ such that $\lim_{t\to\yy}h(t)>l_0$ if $\mu_1\ge\ol\mu_1$, which indicates that spreading happens. The proof is ended.
\end{proof}

The above result show that if $h_0<l_0$  and one of the expanding coefficients $\mu_1$ and $\mu_2$ is large enough, then spreading occurs. One naturally thinks what will happen if both $\mu_1$ and $\mu_2$ are small. The next result gives an answer.

\begin{lem}\label{l3.6}Suppose that $h_0<l_0$. Then there exists a $\bar{\mu}>0$ such that vanishing happens if $\mu_1+\mu_2<\bar{\mu}$.
\end{lem}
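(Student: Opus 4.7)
The strategy is to construct an explicit upper solution $(\bar u,\bar v,\bar h)$ of \eqref{1.6} whose free boundary stays strictly below $l_0$, and then invoke the comparison principle, Lemma \ref{l2.1}. Since $\lambda(\cdot)$ is continuous, strictly decreasing in $l$, and vanishes at $l_0$, the hypothesis $h_0<l_0$ lets me fix some $h_*\in(h_0,l_0)$ with $\lambda(h_*)>0$; let $(\phi_*,\psi_*)$ denote the corresponding positive eigenfunction of \eqref{2.3} on $[0,h_*]$.

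I would then try the time-separated ansatz
\bea
\bar u(t,x)=Me^{-\gamma t}\phi_*(x),\quad \bar v(t,x)=Me^{-\gamma t}\psi_*(x),\quad \bar h(t)=h_0+\tfrac{h_*-h_0}{2}\big(1-e^{-\gamma t}\big),
\eea
for $t\ge 0$ and $x\in[0,\bar h(t)]$, and select the parameters in the order $\gamma\in(0,\lambda(h_*))$ (for the interior equations), $M$ large (for the initial comparison), and finally $\bar\mu$ small (for the Stefan condition). Using \eqref{2.3} together with the concavity-induced estimates $H(z)\le H'(0)z$ and $G(z)\le G'(0)z$ from {\bf(H)}, the two parabolic inequalities collapse to $(\lambda(h_*)-\gamma)\phi_*\ge 0$ and its counterpart for $\psi_*$, both trivially true. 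The boundary conditions at $x=0$ and at $x=\bar h(t)$ are inherited from those of $(\phi_*,\psi_*)$ together with $\bar h(t)<h_*$. For the initial data comparison it suffices to take $M=\max\bigl\{\sup_{(0,h_0]}u_0/\phi_*,\,\sup_{(0,h_0]}v_0/\psi_*\bigr\}$, which is finite by {\bf(I)} since in either boundary case the quotients $u_0/\phi_*$ and $v_0/\psi_*$ extend continuously to $x=0$ (shared zero behaviour for Dirichlet, strict positivity at $0$ for Neumann).

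The main obstacle, and the place where the smallness of $\mu_1+\mu_2$ is consumed, is the Stefan inequality
\bea
\bar h'(t)\ge -\mu_1\bar u_x(t,\bar h(t))-\mu_2\bar v_x(t,\bar h(t)).
\eea
Its left-hand side equals $\tfrac{(h_*-h_0)\gamma}{2}e^{-\gamma t}$, while the right-hand side is bounded above by $Me^{-\gamma t}\bigl(\mu_1\|\phi_*'\|_{L^\yy([0,h_*])}+\mu_2\|\psi_*'\|_{L^\yy([0,h_*])}\bigr)$, irrespective of the sign of $\phi_*'(\bar h(t))$ and $\psi_*'(\bar h(t))$. The $e^{-\gamma t}$ factors cancel, leaving a time-independent threshold of the form $\mu_1+\mu_2\le \bar\mu$, where $\bar\mu>0$ depends only on the quantities $h_0,h_*,\gamma,M,\phi_*,\psi_*$ already fixed. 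Combining this with Lemma \ref{l2.1}, one concludes that $h(t)\le\bar h(t)\le h_0+(h_*-h_0)/2<l_0$ for all $t\ge 0$, so $h_\yy<\yy$ and vanishing occurs.
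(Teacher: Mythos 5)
Your construction founders at the step you yourself identify as the crux: the application of Lemma \ref{l2.1}. Because $\bar h(t)<h_*$ for all $t$, your barrier satisfies $\bar u(t,\bar h(t))=M{\rm e}^{-\gamma t}\phi_*(\bar h(t))>0$, and a Stefan-type comparison principle does not hold for upper solutions that are strictly positive on their own free boundary. The standard proof (take a first touching time $t^*$ with $h(t^*)=\bar h(t^*)$, then apply the Hopf lemma to $w=\bar u-u$ at $(t^*,h(t^*))$ to get $\bar u_x<u_x$ there and hence $\bar h'(t^*)>h'(t^*)$, a contradiction) requires $w(t^*,h(t^*))=0$, i.e.\ $\bar u(t^*,\bar h(t^*))=0$; if $w>0$ at that point it is not a minimum of $w$, no sign information on $w_x$ is available, and the motion of $h$ is not controlled by the barrier. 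The ``$\ge0$'' at $x=\bar h(t)$ in the third line of \eqref{2.1} must be read as ``$=0$'' (as in the sources the lemma cites); taken literally it is false: $(\bar u,\bar v,\bar h)\equiv(K_1,K_2,h_0)$ with $(K_1,K_2)$ from \eqref{2.11a} satisfies every inequality in \eqref{2.1}, yet $h(t)>h_0$ for all $t>0$. So your barrier yields no bound on $h(t)$, and the concluding sentence of your argument does not follow.

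The repair is precisely the paper's construction: compose the eigenfunction with the moving coordinate, $\bar u(t,x)=M{\rm e}^{-\delta t}\phi\big(xh_0/\bar h(t)\big)$ with $(\phi,\psi)$ the eigenfunction on $[0,h_0]$, so that $\bar u,\bar v$ vanish identically at $x=\bar h(t)$. This is not free: the time derivative now produces an extra term proportional to $\phi'(y)\,y h_0\delta^2{\rm e}^{-\delta t}/\bar h$, which is why the paper needs a bound of the form $x\phi'\le K\phi$ and a more delicate choice of $\delta$ before the interior inequality survives, and the Stefan condition must be checked using the actual signs $\phi'(h_0)<0$, $\psi'(h_0)<0$ rather than a crude sup-norm bound. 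The remaining ingredients of your write-up (choice of $\gamma<\lambda(h_*)$, the concavity estimates $H(z)\le H'(0)z$ and $G(z)\le G'(0)z$, finiteness of $M$, smallness of $\mu_1+\mu_2$) are fine, but without making the barrier vanish on its free boundary the comparison step, and hence the proof, does not go through.
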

\begin{proof}
Due to $h_0<l_0$, by Lemma \ref{l2.2}, we see $\lambda(h_0)>0$. Denote by $(\phi,\psi)$ the positive eigenfunction of $\lambda(h_0)$. By Lemma \ref{l2.2} again, we have $\phi=p\omega$ and $\psi=\omega$ where
\[p=\frac{H'(0)}{d_1\nu_1(h_0)+a-\lambda(h_0)}>0\]
and $(\nu_1(h_0),\omega)$ is the principal eigenpair of \eqref{2.2}. Define
\bess
\bar{h}=h_0(1+\delta-\delta {\rm e}^{-\delta t}), ~ \bar{u}=M{\rm e}^{-\delta t}\phi\kk(\frac{xh_0}{\bar{h}(t)}\rr), ~ \bar{u}=M{\rm e}^{-\delta t}\psi\kk(\frac{xh_0}{\bar{h}(t)}\rr),
\eess
where $\delta$ and $M$ are to be chosen later.

We now show that $(\bar{u},\bar{v},\bar{h})$ is an upper solution to the unique solution $(u,v,h)$ of \eqref{1.6}, i.e.,
\bes\left\{\!\begin{array}{ll}\label{3.5}
\bar u_t\ge d_1\bar u_{xx}-a\bar u+H(\bar v), &t>0,~x\in(0,\bar h(t)),\\[1mm]
\bar v_t\ge d_2\bar v_{xx}-b\bar v+G(\bar u), &t>0,~x\in(0,\bar h(t)),\\[1mm]
\mathbb{B}[\bar u](t,0)\ge0, ~ \mathbb{B}[\bar v](t,0)\ge0, ~ \bar u(t,\bar h(t))\ge0, ~ \bar v(t,\bar h(t))\ge0, &t>0,\\[1mm]
\bar h'(t)\ge-\mu_1 \bar u_x(t,\bar h(t))-\mu_2\bar v_x(t,\bar h(t)), & t>0,\\[1mm]
\bar h(0)\ge h_0, ~ \bar u(0,x)\ge u_{0}(x), ~ \bar v(0,x)\ge v_0(x),&0\le x\le h_0.
 \end{array}\right.
 \ees

 Firstly, owing to \eqref{2.3} we can find a suitably large $M>0$ such that $\bar{u}(0,x)=M\phi(x)\ge u_0(x)$ and $\bar{v}(0,x)=M\psi(x)\ge v_0(x)$ in $[0,h_0]$. Moreover, it is easy to see that there exists a constant $K>0$ such that $x\phi'\le K\phi$  and $x\psi'\le K\phi$ in $[0,h_0]$.
For convenience, we denote $(xh_0)/\bar{h}$ by $y$.
Direct computations yield that for $t>0$ and $x\in(0,\bar{h}(t))$,
\bess
&&\bar{u}_t-d_1\bar{u}_{xx}+a\bar{u}-H(\bar{v})\\
&&=-\delta\bar{u}-M{\rm e}^{-\delta t}\phi'(y)\frac{yh_0\delta^2{\rm e}^{-\delta t}}{\bar{h}}-d_1M{\rm e}^{-\delta t}\phi''(y)(\frac{h_0}{\bar{h}})^2+a\bar{u}-H(\bar{v})\\[1mm]
&&\ge\bar{u}\kk[-\delta-\frac{\phi'(y)yh_0\delta^2{\rm e}^{-\delta t}}{\phi(y)\bar{h}}+a-\frac{H'(0)}{p}+(\frac{h_0}{\bar{h}})^2\kk(\lambda(h_0)
-a+\frac{H'(0)}{p}\rr)\rr]\\[1mm]
&&\ge\bar{u}\kk[-\delta-K\delta^2+\frac{\lambda(h_0)}{(1+\delta)^2}+
\frac{H'(0)}{p}\kk(\frac{1}{(1+\delta)^2}-1\rr)\rr]\ge0
\eess
provided that $\delta$ is sufficiently small. Similarly, we can show that for $t>0$ and $x\in(0,\bar{h}(t))$, $\bar{v}_t\ge d_2\bar{v}_{xx}-b\bar{v}+G(\bar{u})$.
Thus to prove \eqref{3.5}, it remains to verify the inequality in the fourth line of \eqref{3.5} since those in the third line are obvious.
Straightforward calculations leads to
\bess
&&-\mu_1\bar{u}_x(t,\bar{h})-\mu_2\bar{v}_x(t,\bar{h})\\
&&=-\mu_1M{\rm e}^{-\delta t}\phi'(h_0)\frac{h_0}{\bar{h}}-\mu_2M{\rm e}^{-\delta t}\psi'(h_0)\frac{h_0}{\bar{h}}\\
&&\le-\mu_1M{\rm e}^{-\delta t}\phi'(h_0)-\mu_2M{\rm e}^{-\delta t}\psi'(h_0)\\
&&\le(\mu_1+\mu_2)M{\rm e}^{-\delta t}(-\phi'(0)-\psi'(h_0))\le\delta^2h_0{\rm e}^{-\delta t}=\bar{h}',
\eess
if $\mu_1+\mu_2\le\frac{\delta^2h_0}{-M(\phi'(0)+\psi'(h_0)}$. So, in view of the comparison principle (Lemma \ref{l2.1}), we see that
\bess (\bar{u}(t,x),\bar{v}(t,x),\bar{h}(t))\ge(u(t,x),v(t,x),h(t)) ~ ~ \forall (t,x)\in[0,\yy)\times[0,h(t)],
\eess
which implies $\lim_{t\to\yy}h(t)\le \lim_{t\to\yy}\bar{h}(t)=h_0(1+\delta)<\yy$. The proof is finished.
\end{proof}

\begin{remark}\label{r3.1}From the above proof, it follows that vanishing happens, if $u_0$ and $v_0$ are small enough. This will be used later to derive a critical value for the parameterized initial function.
\end{remark}
According to Lemmas \ref{l3.4} and \ref{l3.5}, spreading happens if $\mu_1+\mu_2$ is large, while vanishing occurs if $\mu_1+\mu_2$ is small. One naturally thinks whether there exists a critical value for $\mu_1+\mu_2$ such that spreading happens if and only if $\mu_1+\mu_2$ is beyond this value. Indeed, such value does not exist since the unique solution $(u,v,h)$ usually is not monotone about $\mu_1+\mu_2$. But if we let $\mu_2=Q(\mu_1)$ with $Q\in C([0,\yy))$, $Q(0)=0$ and strictly increasing to $\yy$, we can find a critical value as wanted.

\begin{lem}\label{l3.7}Suppose that $h_0<l_0$ and $\mu_2=Q(\mu_1)$ with $Q$ defined as above. Then there exists a unique $\mu^*_1>0$ such that spreading happens if and only if $\mu_1>\mu^*_1$.
\end{lem}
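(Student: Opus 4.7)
The plan is to combine the monotonicity built into Lemma \ref{l2.1} with a continuity argument based on the well-posedness of \eqref{1.6}. First I observe that the unique solution of \eqref{1.6} is strictly increasing in each of the parameters $\mu_1, \mu_2$ (as noted after Lemma \ref{l2.1}). Since $Q$ is strictly increasing with $Q(0)=0$, enlarging $\mu_1$ simultaneously enlarges the pair $(\mu_1, Q(\mu_1))$ componentwise, so the free boundary $h^{\mu_1}(t)$ depends monotonically on $\mu_1$: if $\tilde\mu_1 \ge \mu_1$, then $h^{\tilde\mu_1}(t) \ge h^{\mu_1}(t)$ for every $t \ge 0$. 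Introducing the spreading set
\[\Sigma := \{\mu_1 > 0 : h^{\mu_1}_\infty = \infty\},\]
this monotonicity immediately shows that $\Sigma$ is upward closed.

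Next I would locate $\Sigma$. Lemma \ref{l3.5} (applied to $\mu_1$) guarantees that every sufficiently large $\mu_1$ lies in $\Sigma$, so $\Sigma \neq \emptyset$. On the other hand, since $\mu_1 + Q(\mu_1) \to 0$ as $\mu_1 \to 0^+$ by continuity of $Q$ at $0$, Lemma \ref{l3.6} ensures that every sufficiently small $\mu_1$ lies outside $\Sigma$. Hence $\mu^*_1 := \inf \Sigma$ belongs to $(0,\infty)$, and upward closedness yields $(\mu^*_1, \infty) \subseteq \Sigma$. The uniqueness of $\mu^*_1$ is then automatic; what remains is to exclude $\mu^*_1$ from $\Sigma$ itself, which will force $\Sigma = (\mu^*_1, \infty)$ and hence the ``if and only if'' conclusion of the lemma.

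For the exclusion, I would argue by contradiction. Suppose $\mu^*_1 \in \Sigma$, so $h^{\mu^*_1}_\infty = \infty$. Then there exists $T > 0$ with $h^{\mu^*_1}(T) > l_0$. The standard Schauder/contraction-mapping construction behind Theorem \ref{t1.1}, combined with the uniform bounds on $(u, v)$ and on $h'$, yields that on any finite interval $[0, T]$ the map $\mu_1 \mapsto h^{\mu_1}(T)$ is continuous, since the perturbed problem differs only in the Stefan coefficients $\mu_1$ and $\mu_2 = Q(\mu_1)$, which depend continuously on $\mu_1$. Hence one can choose $\tilde\mu_1 \in (0, \mu^*_1)$ close enough to $\mu^*_1$ so that $h^{\tilde\mu_1}(T) > l_0$. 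Since $h^{\tilde\mu_1}_\infty \ge h^{\tilde\mu_1}(T) > l_0$, Lemma \ref{l3.4} rules out vanishing, so $\tilde\mu_1 \in \Sigma$, contradicting $\tilde\mu_1 < \inf\Sigma$.

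The main obstacle is making the continuous dependence of $h^{\mu_1}(T)$ on $\mu_1$ rigorous; this is standard for one-phase Stefan problems but is not stated as part of Theorem \ref{t1.1}. In practice I would sketch it by noting that any sequence $\mu_1^{(n)} \to \mu^*_1$ produces solutions $(u^{(n)}, v^{(n)}, h^{(n)})$ with uniform $C^{1+\alpha/2, 2+\alpha}$ bounds on a straightened domain (the parameters only enter linearly in the Stefan condition), so Arzel\`a--Ascoli gives a convergent subsequence whose limit must coincide, by uniqueness in Theorem \ref{t1.1}, with $(u^{\mu^*_1}, v^{\mu^*_1}, h^{\mu^*_1})$. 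The rest of the proof is a clean monotonicity-infimum argument.
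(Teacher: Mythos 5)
Your proposal is correct and follows essentially the same route as the paper: monotonicity of the solution in $\mu_1$ (via Lemma \ref{l2.1} and the monotonicity of $Q$), Lemmas \ref{l3.5} and \ref{l3.6} to show the spreading set is a nonempty proper upward-closed subset of $(0,\infty)$, and continuous dependence on $\mu_1$ together with Lemma \ref{l3.4} to exclude the infimum $\mu_1^*$ from the spreading set. The paper outsources exactly this argument to the proof of Theorem 3.10 in \cite{WD} and, like you, asserts rather than proves the continuous dependence, so your sketch of that step is at the same level of rigor as the original.
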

\begin{proof}
By Lemma \ref{l3.4}, we know that spreading happens if $\mu_1$ is suitably large. Moreover, thanks to Lemma \ref{l3.5}, vanishing occurs if $\mu_1$ is small enough. Using Lemma \ref{l2.1} yields that $(u,v,h)$ is strictly increasing in $\mu_1$. Combining these with the continuous dependence of $(u,v,h)$ on $\mu_1$ and arguing as in the proof of \cite[Theorem 3.10]{WD}, we can finish the proof. The details are omitted here.
\end{proof}

Now we are in the position to investigate the effect of initial function $(u_0,v_0)$ on spreading and vanishing. Due to Remark \ref{r3.1}, we know that if $(u_0,v_0)$ is small enough, then vanishing happens. Our next result show that if one of $u_0$ and $v_0$ is sufficiently large, then spreading occurs.

\begin{lem}\label{l3.8}Suppose $h_0<l_0$. Spreading will happen if $u_0$ or $v_0$ is large enough.
\end{lem}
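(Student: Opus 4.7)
The plan is to argue by contradiction: suppose vanishing happens, so by Lemma \ref{l3.4} we have $h_\infty \le l_0$, and by Lemma \ref{l3.1}, $(u,v) \to (0,0)$ uniformly on $[0,h(t)]$ as $t \to \infty$. I will derive a contradiction by constructing, for sufficiently large initial data, a lower solution to \eqref{1.6} that pushes the free boundary past $l_0$ in finite time. I treat only the case where $u_0$ is large; the case of large $v_0$ is entirely analogous by the symmetric roles of $u$ and $v$ in the system.

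The key construction is as follows. Fix $l^\ast > l_0$, so $\lambda(l^\ast) < 0$, and let $(\phi^\ast,\psi^\ast)$ be the corresponding positive eigenfunction of \eqref{2.3} on $(0,l^\ast)$ supplied by Lemma \ref{l2.2}. In the spirit of the sub-solution construction used in the stability argument of Lemma \ref{l2.3}, for sufficiently small $\ep>0$ the pair $\ep(\phi^\ast,\psi^\ast)$ serves as a sub-solution to the stationary problem on $[0,l^\ast]$, since the $O(\ep^2)$ correction from the strict concavity of $H$ and $G$ is dominated by the negative $\ep\lambda(l^\ast)$ contribution. I adapt this to the free boundary setting by taking $\underline h(t) = h_0 + \delta t$ with small $\delta>0$, running up to $T^\ast := (l^\ast-h_0)/\delta$, and defining
\[
\underline u(t,x) = \ep\, \phi^\ast\!\left(\frac{xl^\ast}{\underline h(t)}\right), \qquad \underline v(t,x) = \ep\, \psi^\ast\!\left(\frac{xl^\ast}{\underline h(t)}\right), \qquad 0 \le x \le \underline h(t).
\]

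The verification of the lower-solution inequalities (the reversed form of those in Lemma \ref{l2.1}) runs as follows. The PDE inequalities, after substituting and invoking the eigenvalue identity for $(\phi^\ast,\psi^\ast)$, reduce to balancing the $O(\ep)$ good term $\ep\lambda(l^\ast)\phi^\ast$, which is negative, against the $O(\ep^2)$ concavity correction from $H$ and $G$ and a drift term of order $\ep\delta$ arising from the moving spatial rescaling; taking $\ep$ small and $\delta$ much smaller makes the good term dominate in the interior. The Stefan condition $\underline h'(t) = \delta \le -\mu_1 \underline u_x(t,\underline h(t)) - \mu_2 \underline v_x(t,\underline h(t))$ reduces, via the Hopf lemma applied to $\phi^\ast$ and $\psi^\ast$ at $l^\ast$, to $\delta \le \ep (\mu_1 |\phi^{\ast\prime}(l^\ast)| + \mu_2|\psi^{\ast\prime}(l^\ast)|)\, l^\ast/\underline h(t)$, which holds once $\delta$ is small relative to $\ep$. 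The remaining initial inequalities $\underline u(0,x)\le u_0(x)$ and $\underline v(0,x)\le v_0(x)$ on $[0,h_0]$ amount to $\ep\phi^\ast(xl^\ast/h_0)\le u_0(x)$ and $\ep\psi^\ast(xl^\ast/h_0)\le v_0(x)$. The inequality for $v_0$ is automatic once $\ep$ is small, since $v_0>0$ in the interior by assumption {\bf(I)}; the inequality for $u_0$ is precisely where the largeness of $u_0$ enters, and it holds for all sufficiently large $u_0$ because $u_0$ and $\phi^\ast$ have compatible boundary behavior at $x=0$ and $x=h_0$. Lemma \ref{l2.1} then gives $h(T^\ast) \ge \underline h(T^\ast) = l^\ast > l_0$, contradicting $h_\infty \le l_0$, so spreading must occur.

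The main obstacle is the delicate verification of the PDE inequality near the moving boundary $x = \underline h(t)$: the good term $\ep\lambda(l^\ast)\phi^\ast$ vanishes there, while the drift term $-\ep\phi^{\ast\prime}(\xi)\xi \underline h'/\underline h$ remains strictly positive since $\phi^{\ast\prime}(l^\ast)<0$. The natural fix is to carry out the construction with an eigenfunction on a slightly larger domain $(0,l^{\ast\ast})$ with $l^{\ast\ast}>l^\ast$ and to restrict $\underline h(t)$ so that $\xi = xl^{\ast\ast}/\underline h(t)$ stays bounded away from $l^{\ast\ast}$, which provides a uniform negative buffer in $\phi^{\ast\prime}(\xi)$ and restores dominance of the good terms throughout $[0,\underline h(t)]$.
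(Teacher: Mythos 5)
Your strategy is genuinely different from the paper's, but it has a fatal gap that you yourself have located without resolving. Near the free boundary the differential inequality for a lower solution cannot hold: with $\xi=xl^\ast/\underline h(t)$ one has $\underline u_t=-\ep\,\phi^{\ast\prime}(\xi)\,\xi\,\underline h'/\underline h$, which tends to the strictly positive limit $\ep|\phi^{\ast\prime}(l^\ast)|\,l^\ast\delta/\underline h$ as $x\to\underline h(t)$, while every term on the right-hand side ($d_1\underline u_{xx}$, $-a\underline u$, $H(\underline v)$, and in particular the ``good'' term $-\lambda(l^\ast)\ep\phi^\ast$) vanishes there because $\phi^\ast(l^\ast)=\psi^\ast(l^\ast)=0$. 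Making $\delta$ small shrinks the bad term but never below the vanishing good terms in a full neighborhood of $x=\underline h(t)$. Your proposed fix is incoherent: if $\underline u(t,x)=\ep\phi^{\ast\ast}(xl^{\ast\ast}/\underline h(t))$ then $\xi=l^{\ast\ast}$ exactly at $x=\underline h(t)$, so $\xi$ cannot ``stay bounded away from $l^{\ast\ast}$''; and if you truncate the rescaling so that it does, then $\underline u(t,\underline h(t))>0$, violating the boundary requirement for a lower solution in Lemma \ref{l2.1}. There is also a decisive structural objection: your construction uses the largeness of $u_0$ only to fit $\ep\phi^\ast$ under the initial datum, while $\ep$ must in any case be small (to control the $O(\ep^2)$ concavity deficit) and $\delta$ can then be taken arbitrarily small to satisfy the Stefan inequality for \emph{any} $\mu_1,\mu_2>0$. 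If the construction were valid it would therefore prove spreading for every admissible initial datum with nondegenerate normal derivative at $h_0$ and arbitrarily small $\mu_1+\mu_2$, contradicting Lemma \ref{l3.6}. (A secondary issue: the claim that $\ep\psi^\ast(xl^\ast/h_0)\le v_0$ is ``automatic for small $\ep$'' fails when $v_0'(h_0)=0$, which {\bf(I)} permits, since the left side vanishes only to first order at $x=h_0$.)

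The paper's proof avoids all of this by a quite different construction. It decouples the system, using only $u_t\ge d_1u_{xx}-au$, and builds a scalar self-similar lower solution $\ud u=\rho(t+\ep)^{-\sigma}\Phi(x/\ud h)$ with $\ud h(t)=\sqrt{t+\ep}$ and $\ud h(0)=\sqrt{\ep}<h_0$. Two features are essential and absent from your proposal: the drift produced by the moving rescaling is absorbed exactly by the first-order term built into the modified eigenvalue problem \eqref{3.7} (at the free boundary, $\tfrac12\Phi'(1)+d_1\Phi''(1)+\lambda_1\Phi(1)=0$, so the inequality holds up to and including $x=\ud h(t)$); and the lower solution is supported initially on the proper subinterval $[0,\sqrt{\ep}]$, so the largeness of $u_0$ is needed only there (where $\ud u(0,\cdot)$ is of size $\rho/\ep^{\sigma}$) and the boundary behavior of $u_0$ at $x=h_0$ never enters. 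The boundary then reaches $\sqrt{T+\ep}>l_0$ in finite time and Lemma \ref{l3.4} concludes. You would need to replace your quasi-stationary eigenfunction ansatz with a construction of this type (or otherwise inject the largeness of the data into the dynamics, not merely the initial comparison) for the argument to close.
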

\begin{proof}
We will prove this result by constructing some suitable lower solution for \eqref{1.6}. The proof is divided into two step. Since the arguments are parallel, we only prove this assertion for $u_0$.

{\bf Step 1.} In this step, we deal with the case operator $\mathbb{B}[w]=w$. Clearly, the solution component $(u,h)$ satisfies
\bess\left\{\!\begin{array}{ll}
u_t\ge d_1u_{xx}-au, &t>0,~x\in(0,h(t)),\\[1mm]
u(t,0)=u(t,h(t))=0, &t>0,\\[1mm]
h'(t)>-\mu_1 u_x(t,h(t)), & t>0,\\[1mm]
h(0)=h_0, ~ u(0,x)=u_{0}(x),&0\le x\le h_0.
 \end{array}\right.
 \eess
It is easy to verify that the eigenvalue problem
\bes\left\{\!\begin{array}{ll}\label{3.7}
d_1\varphi''+\frac{1}{2}\varphi+\lambda\varphi=0, &x\in(\frac{1}{2},1),\\[1mm]
\varphi'(\frac{1}{2})=\varphi(1)=0
 \end{array}\right.
 \ees
has an eigenpair $(\lambda_1,\varphi)$ with $\lambda>\frac{1}{16d_1}$, $\varphi$ positive in $[\frac{1}{2},1)$ and $\varphi'<0$ in $(\frac{1}{2},1]$. Define
 \bess
 \Phi=\varphi(1-x), ~ ~ x\in[0,1/2]; ~ ~ ~ \Phi(x)=\varphi(x), ~ ~ x\in[1/2,1].
 \eess
 Certainly, $\Phi$ satisfies
 \bess\left\{\!\begin{array}{ll}
d_1\Phi''+\frac{{\rm sgn}(x-\frac{1}{2})}{2}\Phi'+\lambda_1\Phi=0, &x\in(0,1),\\[1mm]
\Phi(0)=\Phi(1)=0.
 \end{array}\right.
 \eess
Let positive constants $\ep,L_0,T,\sigma,\rho$ satisfy
\bess
0<\ep<\min\{1,h^2_0\}, ~ ~ L_0=1+l_0, ~ ~ T>L^2_0,\\
\sigma>\lambda_1+a(T+1), ~ ~ -2\mu_1\rho\Phi'(1)>(T+1)^{\sigma}.
\eess
Define
\bess
\ud h(t)=\sqrt{t+\ep} ~ ~ {\rm and } ~ ~
\ud u=\frac{\rho}{(t+\ep)^{\sigma}}\Phi(\frac{x}{\ud h}).
\eess
Direct computations yield that for $t\in(0,T]$ and $x\in(0,\ud h(t))$,
\bess
\ud u_t-d_1\ud u_{xx}+a\ud u&=&\frac{-\rho}{(t+\ep)^{\sigma+1}}\kk(\sigma\Phi+\frac{x\Phi'}{2\ud h}+d_1\Phi''-a(t+\ep)\Phi\rr)\\
&\le&\frac{-\rho}{(t+\ep)^{\sigma+1}}\kk(\frac{{\rm sgn}(x-\frac{1}{2})\Phi'}{2}+d_1\Phi''+[\sigma-a(t+\ep)]\Phi\rr)\\
&\le&\frac{-\rho}{(t+\ep)^{\sigma+1}}\kk(\frac{{\rm sgn}(x-\frac{1}{2})\Phi'}{2}+d_1\Phi''+\lambda_1\Phi\rr)=0.
\eess
Notice that $\ud h(0)=\sqrt{\ep}<h_0$. Thus $\ud u(0,x)=\frac{\rho}{\ep^{\sigma}}\Phi(\frac{x}{\sqrt{\ep}})\le u_0(x)$ in $[0,\sqrt{\ep}]$ if $u_0$ is large enough. Moreover, $\ud u(t,0)=\ud u(t,\ud h)=0$ for $t\in[0,T]$. Simple calculations show
\bess
\ud h'+\mu_1 \ud u_x(t,\ud h)=\frac{1}{2\ud h}+\mu_1\frac{\rho}{(t+\ep)^{\sigma}\ud h}\Phi'(1)\le \frac{1}{2\ud h}\kk(1+\frac{2\mu_1\rho\Phi'(1)}{(T+1)^{\sigma}}\rr)<0.
\eess
By a comparison argument, we know that $h(t)\ge\ud h(t)$ for $t\in[0,T]$. Thus $h(T)\ge\ud h(T)=\sqrt{T+\ep}>L_0>l_0$, which combined with Lemma \ref{l3.4} implies that spreading happens. This step is ended.

{\bf Step 2.} This step involves the case $\mathbb{B}[w]=w'$. Let $(\lambda_1,\varphi_1)$ be the principal eigenpair of \eqref{3.7} with $1/2$ replaced by $0$. Clearly, $\lambda_1>\frac{1}{16d_1}$, $\varphi_1$ is positive in $[0,1)$ and $\varphi'<0$ in $(0,1]$.
The positive constants $\ep,L_0,T,\sigma,\rho$ are given as above but $-2\mu_1\rho\varphi'_1(1)>(T+1)^{\sigma}$.
Define
\bess
\ud h(t)=\sqrt{t+\ep} ~ ~ {\rm and } ~ ~
\ud u=\frac{\rho}{(t+\ep)^{\sigma}}\varphi_1\kk(\frac{x}{\ud h}\rr).
\eess
Direct computations yield that for $t\in(0,T]$ and $x\in(0,\ud h(t))$,
\bess
\ud u_t-d_1\ud u_{xx}+a\ud u
&=&\frac{-\rho}{(t+\ep)^{\sigma+1}}\kk(\sigma\varphi_1+\frac{x\varphi'_1}{2\ud h}+d_1\varphi''_1-a(t+\ep)\varphi_1\rr)\\
&\le&\frac{-\rho}{(t+\ep)^{\sigma+1}}\kk(\frac{\varphi'_1}{2}+d_1\varphi''_1
+[\sigma-a(t+\ep)]\varphi_1\rr)\\
&\le&\frac{-\rho}{(t+\ep)^{\sigma+1}}\kk(\frac{\varphi'_1}{2}+d_1\varphi''_1
+\lambda_1\varphi_1\rr)=0.
\eess
Obviously, $\ud h(0)=\sqrt{\ep}<h_0$. So we can let $u_0$ be large enough such that $\ud u(0,x)=\frac{\rho}{\ep^{\sigma}}\Phi(\frac{x}{\sqrt{\ep}})\le u_0(x)$ in $[0,\sqrt{\ep}]$.  Besides, $\ud u(t,0)=\ud u(t,\ud h)=0$ for $t\in[0,T]$. Similar to step 1, we have
\bess
\ud h'+\mu_1 \ud u_x(t,\ud h)=\frac{1}{2\ud h}+\mu_1\frac{\rho}{(t+\ep)^{\sigma}\ud h}\varphi'_1(1)\le \frac{1}{2\ud h}\kk(1+\frac{2\mu_1\rho\varphi'_1(1)}{(T+1)^{\sigma}}\rr)<0.
\eess
Analogously, the desired result follows from a comparison method. The proof is finished.
\end{proof}

Let us parameterize the initial function $(u_0,v_0)$ as $\tau(\vartheta_1,\vartheta_2)$ with $\tau>0$ and {\bf (I)} holding for $(\vartheta_1,\vartheta_2)$. With the aid of Remark \ref{r3.1} and Lemma \ref{l3.8}, we can derive a critical value for $\tau$ governing spreading and vanishing.

\begin{lem}\label{l3.9} Assume that $h_0<l_0$. Let $(u_0,v_0)=\tau(\vartheta_1,\vartheta_2)$. Then there exists a unique $\tau^*>0$ such that spreading happens if and only if $\tau>\tau^*$.
\end{lem}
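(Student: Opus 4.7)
The plan is to exploit the monotonicity of the solution of \eqref{1.6} in the initial data, together with Remark \ref{r3.1} and Lemma \ref{l3.8}, to isolate a sharp threshold $\tau^*$. Write $(u^\tau, v^\tau, h^\tau)$ for the unique solution of \eqref{1.6} corresponding to initial function $(\tau\vartheta_1, \tau\vartheta_2)$, and set $h^\tau_\yy := \lim_{t\to\yy} h^\tau(t)$. By Lemma \ref{l2.1}, the triple $(u^\tau, v^\tau, h^\tau)$ is strictly increasing in $\tau$; in particular $h^\tau_\yy$ is nondecreasing in $\tau$, so ``spreading at $\tau_0$'' automatically propagates to all $\tau > \tau_0$.

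First I would introduce the vanishing set $\Sigma_v := \{\tau > 0 : h^\tau_\yy < \yy\}$. Remark \ref{r3.1} tells us that vanishing occurs whenever the initial data are small enough, so every sufficiently small $\tau$ lies in $\Sigma_v$, and $\Sigma_v$ is nonempty. Lemma \ref{l3.8} provides the opposite endpoint: for sufficiently large $\tau$ spreading happens, so $\Sigma_v$ is bounded above. Combined with the monotonicity of $h^\tau_\yy$ in $\tau$, this forces $\Sigma_v$ to be a downward closed subset of $(0,\yy)$, and we may define
\[\tau^* := \sup \Sigma_v \in (0, \yy).\]
Every $\tau > \tau^*$ lies outside $\Sigma_v$ and thus produces spreading.

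The main obstacle is to settle the borderline case $\tau = \tau^*$ and to show that vanishing, not spreading, occurs there, so that the dichotomy is indeed governed by the strict inequality $\tau > \tau^*$. I would argue by contradiction, following the continuous-dependence idea used in the proof of Lemma \ref{l3.7} and in \cite[Theorem 3.10]{WD}. Suppose $h^{\tau^*}_\yy = \yy$. Then one can pick $T > 0$ with $h^{\tau^*}(T) > l_0$. Continuous dependence of the classical solution on the initial data over the finite interval $[0,T]$ (obtained by straightening out the free boundary and invoking parabolic Schauder estimates, as in the existence theory behind Theorem \ref{t1.1}) yields $h^\tau(T) > l_0$ for every $\tau$ sufficiently close to $\tau^*$, and in particular for some $\tau < \tau^*$. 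Lemma \ref{l3.4} then forces spreading at this $\tau$, contradicting $\tau \in \Sigma_v$. Hence $\tau^* \in \Sigma_v$, and vanishing holds at $\tau = \tau^*$.

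Putting the pieces together: $\{\tau > 0 : \text{spreading}\} = (\tau^*, \yy)$ and $\{\tau > 0 : \text{vanishing}\} = (0, \tau^*]$. Uniqueness of $\tau^*$ is automatic from this characterization, completing the proof.
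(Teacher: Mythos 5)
Your proposal is correct and follows essentially the same route as the paper: small $\tau$ gives vanishing by Remark \ref{r3.1}, large $\tau$ gives spreading by Lemma \ref{l3.8}, monotonicity in $\tau$ from Lemma \ref{l2.1} yields a threshold, and the borderline case $\tau=\tau^*$ is settled by the same continuous-dependence contradiction via Lemma \ref{l3.4}. The only cosmetic difference is that you define $\tau^*$ as the supremum of the vanishing set while the paper takes the infimum of the spreading set; these coincide by monotonicity.
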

\begin{proof}
Due to Remark \ref{r3.1}, vanishing occurs for all small $\tau$. In view of Lemma \ref{l3.8}, spreading happens for all large $\tau>0$. From Lemma \ref{l2.1}, it follows that the unique solution $(u,v,h)$ of \eqref{1.6} is strictly increasing in $\tau$. Define
\[\tau^*=\inf\{\tau_0>0: {\rm spreading ~ happens ~ for ~ }\tau\ge\tau_0\}.\]
Clearly, $\tau^*$ is well defined and $\tau^*>0$. It is easy to see that spreading happens for all $\tau>\tau^*$. Now we show that vanishing occurs if $\tau<\tau^*$. Otherwise, there exists a $\tau_0\in (0,\tau^*)$ such that spreading happens with $\tau=\tau_0$. By the monotonicity, we deduce that spreading occurs for all $\tau\ge\tau_0$ which obviously contradicts the definition of $\tau^*$. Thus if $\tau<\tau^*$, then vanishing must happen.

It remains to check the case $\tau=\tau^*$. If spreading happens for $\tau=\tau^*$, then there exists a $t_0>0$ such that $h(t_0)>l_0$. By the continuous dependence of $(u,v,h)$ on $\tau$, there is a small $\ep>0$ such that the unique solution $(u_{\ep},v_{\ep},h_{\ep})$ of \eqref{1.6} with $\tau=\tau^*-\ep$ satisfies that $h_{\ep}(t_0)>l_0$ which, by Lemma \ref{l3.4}, implies that spreading occurs. Clearly, this is a contradiction. Hence if $\tau=\tau^*$, then vanishing occurs. The proof is finished.
\end{proof}

Clearly, Theorem \ref{t1.3} follows from Lemmas \ref{l3.3}-\ref{l3.9}.

\end{document}